%% file: arxiv_main.tex
\documentclass[11pt,a4paper]{article}

\usepackage[margin=0.85in]{geometry}
\usepackage{amsmath,amssymb,amsthm,mathtools,bm}
\usepackage{booktabs,multirow,array,longtable}
\usepackage{graphicx,subcaption}
\usepackage{enumitem}
\usepackage{xcolor}
\usepackage{xurl}
\usepackage[hidelinks]{hyperref}
\usepackage{caption}
\usepackage{adjustbox}
\usepackage{setspace}
\usepackage{csquotes}
\usepackage{placeins}
\usepackage{float}
\usepackage[numbers,sort&compress]{natbib}

\graphicspath{{./}}

\newcommand{\E}{\mathbb{E}}

\theoremstyle{plain}

\theoremstyle{definition}
\newtheorem{proposition}{Proposition}
\newtheorem{lemma}{Lemma}
\newtheorem{definition}{Definition}
\newtheorem{problem}{Problem}
\newtheorem{remark}{Remark}

\renewcommand{\arraystretch}{0.96}
\newcommand{\TableFont}{\scriptsize}
\newcommand{\TableSetup}{\TableFont\setlength{\tabcolsep}{3.0pt}\renewcommand{\arraystretch}{1.03}}
\newcommand{\FitTable}[1]{\begin{adjustbox}{max width=\textwidth,center}#1\end{adjustbox}}

\title{Benchmarking Deep Time Series Models for Equity Portfolios}
\author{Aoxin Zhang\\
\small School of Mathematical Sciences, Beijing Normal University, Beijing, China\\
\small \texttt{202211130012@mail.bnu.edu.cn}
\and Yuhan Cheng\\
\small School of Management, Shandong University, Jinan, China\\
\small \texttt{chengyuhan@sdu.edu.cn}
\and Kwanting Leung\\
\small Peking University National School of Development, Beijing, China\\
\small \texttt{ktliang2021@nsd.pku.edu.cn}}
\date{}

\begin{document}
\maketitle

\begin{abstract}
Benchmarking forecasting architectures for daily equity portfolios is not just a prediction exercise. It also asks which model remains usable after preferences, costs, and portfolio constraints are imposed. We build a CRSP daily-stock benchmark for 15 deep and statistical time-series architectures over 2018--2024. The protocol combines common-window decile portfolios, stochastic multi-criteria acceptability analysis, a deployment-adjusted acceptability index, and a constrained quadratic portfolio layer with capacity, beta, industry, risk, leverage, and turnover controls. The index starts from the SMAA rank-acceptability distribution and downweights models whose criteria-level wins produce high portfolio regret; its Gibbs form is characterized as an entropic update from the SMAA prior. Empirically, no architecture dominates the raw benchmark: TransEnc-8 has the largest rank-1 acceptability, 0.352, and no model exceeds about 0.36. Rankings vary with preferences, market state, feature universe, and transaction costs. In the promoted five-model constrained-portfolio comparison, TransEnc-8 is selected throughout, while return-oriented raw rankings can favor TS-RIDGE. Broad-universe decile signals can survive costs, but the baseline constrained-QP net Sharpe at 20 bps is negative for every promoted model. The benchmark supports model selection and diagnosis rather than a standalone trading-strategy claim.
\end{abstract}

\noindent\textbf{Keywords:} deep time-series benchmarking; equity portfolios; multi-criteria decision analysis; preference uncertainty; stochastic acceptability analysis; portfolio implementation

\vspace{0.75em}
\noindent\textbf{Corresponding author:} Yuhan Cheng, \texttt{chengyuhan@sdu.edu.cn}.

\section{Introduction}
\label{sec:introduction}
A benchmark for equity-portfolio forecasting architectures has to evaluate the forecast and the portfolio decision it feeds. The same score vector can look strong in a broad long--short decile sort and weak after turnover, capacity, beta, industry, risk, and leverage controls. Preferences also vary across institutions. Some users care most about raw return. Others put more weight on drawdown, turnover, statistical evidence, or transaction costs. This paper therefore treats the CRSP architecture comparison as a stochastic multi-criteria benchmarking problem under preference uncertainty.

The benchmark has three linked components. Common-window decile portfolios measure raw cross-sectional signal strength. Stochastic multi-criteria acceptability analysis (SMAA) maps the preference states under which each architecture is admissible. A constrained daily QP portfolio layer then evaluates the same scores after implementation constraints are imposed. The link between the last two components is a deployment-adjusted acceptability index, an entropically regularized update of SMAA rank acceptability that shifts mass away from preference-region winners with large downstream portfolio regret.

The formal material is kept close to this benchmark. Robustification and value-sensitivity statements are standard finite-dimensional optimization facts, included as interpretive bridges. The model-specific formal elements are the entropic characterization of the deployment-adjusted index and the turnover alignment: on a finite-model turnover slice, the SMAA low-turnover weight and the portfolio turnover penalty induce the same ranking after the transformation \(\lambda(u)=\kappa u/(1-u)\). These results explain how preference uncertainty and implementation frictions can be compared inside one architecture benchmark.

The empirical design separates objects that are often compressed into a single model-selection table. The decile layer measures the economic strength of the raw scores. The SMAA layer asks which architecture is acceptable over uncertain criteria and uncertain preferences. The constrained portfolio layer asks what remains after the score is converted into a daily allocation. This separation is essential in daily equity data because broad cross-sectional sorting can generate large pre-cost Sharpe ratios even when the implementable next-day allocation remains cost-fragile.

The results give a portable benchmark for selection and diagnosis. TS-RIDGE is the strongest full-signal shrinkage benchmark in broad decile portfolios. TransEnc-8 becomes the least costly constrained implementation after realistic turnover control. The acceptability surface explains why neither statement is preference-free. Ranking implementation losses is also decision-relevant: it identifies architectures that should not be deployed at daily frequency under the stated constraints and prevents the return-oriented misselection of TS-RIDGE that the raw acceptability surface would invite.

\section{Related Literature}
\label{sec:literature}
This paper sits at the intersection of two operational-research literatures and uses empirical asset pricing as its application domain. The first is multi-criteria decision analysis under uncertainty: stochastic multi-criteria acceptability analysis ranks alternatives over distributions of criteria and preferences \citep{lahdelma2001smaa,tervonen2008smaa,greco2016mcda}, with portfolio and allocation applications, including the equity-portfolio decision-support setting of Xidonas et al. \citep{xidonas2012ejor}. Our alternatives are forecasting architectures rather than assets, and the acceptability layer is linked to a downstream constrained portfolio through realized regret. The second is decision-focused forecasting, where the quality of a forecast is judged through the optimization problem it feeds, as in smart predict-then-optimize \citep{elmachtoub2022smart}. The deployment adjustment has the Gibbs and exponential-weights form familiar from entropic regularization and prediction with expert advice \citep{CesaBianchiLugosi2006}. The paper contributes to this intersection by adding an explicit regret discount to the SMAA acceptability distribution and by reporting where that discount changes the architecture choice.

The remaining optimization facts are interpretive bridges. Robustification-as-regularization links score ambiguity to norm penalties in the portfolio layer \citep{GoldfarbIyengar2003,DelageYe2010,BertsimasCopenhaver2018,BlanchetKangMurthy2019,ShafieezadehAbadehKuhnEsfahani2019,costa2023dro}, and the value-sensitivity bound follows standard parametric optimization \citep{BonnansShapiro2000} and decision-loss analysis \citep{ElBalghitiElmachtoubGrigasTewari2019,elmachtoub2022smart}. These results explain why downstream constraints can change architecture rankings; the paper's original object remains the entropically regularized deployment-adjusted index and the empirical benchmark that links acceptability to constrained implementation. The turnover-weight alignment is an exact finite-model ranking equivalence on a one-dimensional turnover slice; when the full preference simplex moves, it becomes a comparison-static interpretation based on increasing differences \citep{Topkis1998}.

Empirical asset pricing supplies the data and the criteria. Flexible methods improve cross-sectional prediction in U.S. equities \citep{GuKellyXiu2020,ChenPelgerZhu2024}, yet trading costs, capacity, and implementability condition the economic value of those forecasts \citep{AvramovChengMetzker2023,DeMiguelMartinUtreraNogalesUppal2020,DetzelNovyMarxVelikov2023,ChenVelikov2023,FrazziniIsraelMoskowitz2018,AsnessFrazziniIsraelMoskowitzPedersen2018}. Factor models and replication filters provide diagnostic restrictions rather than the central contribution \citep{FamaFrench1993,FamaFrench2015,HouXueZhang2015,HouMoXueZhang2021,NovyMarx2013,McLeanPontiff2016,HouXueZhang2020,ChenZimmermann2022}, and forecasting archives or competitions are used only as transfer checks because their target losses do not contain the portfolio map, transaction costs, and cross-sectional rank objective studied here \citep{Godahewa2021,Makridakis2022,LimArikLoeffPfister2021,OreshkinCarpovChapadosBengio2020,ZengChenZhangXu2023,Rasul2023,Aksu2024}.

Model-comparison inference anchors the empirical design. Sharpe-ratio inference follows Lo~\cite{Lo2002} and Ledoit and Wolf~\cite{LedoitWolf2008}. Regression inference uses Newey--West adjustments \citep{NeweyWest1987}. Data-snooping concerns are addressed with bootstrap, false-discovery, and predictive-ability tools \citep{White2000,Hansen2005,RomanoWolf2005,HansenLundeNason2011,DieboldMariano1995,HarveyLiuZhu2016}.

\section{Methodology: Acceptability, Regret, and Allocation}
\label{sec:problem}
Let \(\mathcal M\) denote the set of candidate forecasting architectures. Each model \(m\in\mathcal M\) produces a daily score vector \(s_t^{(m)}\) that ranks stocks before the next trading day's return is realized. The benchmark has two layers. The first ranks models by portfolio-relevant criteria under sampling and preference uncertainty. The second sends the same forecasts into a constrained portfolio map and evaluates realized deployment loss.

\begin{definition}[Criteria vector]
Each model \(m\) is represented by
\[
V_m=\left( SR^{gross}_m, SR^{net20}_m, SR^{vw}_m, |t^{FF5}_m|, -TO_m, -MDD_m, 1-p_m \right),
\]
where \(SR\) denotes Sharpe ratio, \(TO\) denotes turnover, \(MDD\) denotes maximum drawdown, and \(p_m\) is a bootstrap loss proxy. All coordinates are oriented so that larger values indicate better performance.
\end{definition}

\begin{definition}[Rank acceptability]
Criteria uncertainty is represented by moving-block resamples \(b\), and preference uncertainty is represented by weight vectors \(q\) on the seven-criterion simplex. For each \((b,q)\),
\[
\begin{aligned}
U_m^{(b,q)}&=\sum_{k=1}^{7}q_k V_{mk}^{(b)},\\
R_m^{(b,q)}&=\operatorname{rank}(U_m^{(b,q)}),\\
RA_m(r)&=\Pr\{R_m^{(b,q)}=r\}.
\end{aligned}
\]
The rank acceptability index is the share of sampling and preference states in which model \(m\) receives rank \(r\).
\end{definition}

\begin{definition}[Deployment regret]
Let \(J_m^{d}\) be the realized net Sharpe ratio delivered by model \(m\) under downstream design \(d\), such as the baseline constrained portfolio or a transaction-cost stress. The deployment-level regret of \(m\) is
\[
\mathcal R_m^{d}=\max_{\ell\in\mathcal M_d}J_\ell^{d}-J_m^{d},
\]
where \(\mathcal M_d\) is the promoted model set evaluated in design \(d\). A positive regret means that the criteria-selected model loses realized net Sharpe relative to the best model after the downstream portfolio rule is applied.
\end{definition}

\begin{definition}[Deployment-adjusted acceptability]
Fix a downstream design \(d\) and a comparison set \(\mathcal A_d\subseteq\mathcal M\) with at least one model having positive rank-1 acceptability. Let \(RA_m(1)\) be the SMAA rank-1 acceptability of model \(m\), let \(\mathcal R_m^d\) be its deployment-level regret from Definition~3, and let \(\gamma_d>0\) be the regret-penalty strength. The deployment-adjusted rank-1 acceptability index is
\[
DA_m^{(1),d}
=\frac{RA_m(1)\exp\{-\gamma_d\mathcal R_m^d\}}
{\sum_{\ell\in\mathcal A_d}RA_\ell(1)\exp\{-\gamma_d\mathcal R_\ell^d\}},
\qquad m\in\mathcal A_d .
\]
The deployment-adjusted top-3 index uses the same regret discount and replaces rank-1 acceptability by top-3 acceptability:
\[
DA_m^{(3),d}
=\frac{\{RA_m(1)+RA_m(2)+RA_m(3)\}\exp\{-\gamma_d\mathcal R_m^d\}}
{\sum_{\ell\in\mathcal A_d}\{RA_\ell(1)+RA_\ell(2)+RA_\ell(3)\}\exp\{-\gamma_d\mathcal R_\ell^d\}}.
\]
The parameter \(\gamma_d\) controls the strength of the regret penalty: larger values transfer more acceptability mass away from models whose criteria-level wins produce high downstream regret. In the reported implementation this is equivalent to the code form \(\exp\{-\mathcal R_m^d/\tau_d\}\), with \(\gamma_d=1/\tau_d\). The temperature is set to the dispersion of downstream regret with a unit floor,
\[
\tau_d=\max\{\operatorname{sd}_0(\mathcal R_m^d:m\in\mathcal A_d),1\},
\]
where \(\operatorname{sd}_0\) is the population standard deviation across models in \(\mathcal A_d\). This makes the discount invariant to the units in which regret is measured and prevents the exponent from becoming arbitrarily sharp when regrets are small; the baseline design gives \(\tau_d\approx1.170\), so the unit floor is not binding. Sensitivity checks perturb the dimensionless multiplier \(c\) in \(\exp\{-c\mathcal R_m^d/\tau_d\}\), with \(c=1\) as the production definition; this multiplier is distinct from the penalty strength \(\gamma_d=1/\tau_d\).
\end{definition}
\begin{lemma}[Basic properties of deployment-adjusted acceptability]\label{lem:deployment-adjusted-properties}
Assume that \(RA_m(1)\geq0\), \(\sum_{m\in\mathcal A_d}RA_m(1)>0\), and \(\gamma_d>0\). Then \(DA^{(1),d}\) lies in the probability simplex over \(\mathcal A_d\). For any model with \(0<DA_m^{(1),d}<1\), \(DA_m^{(1),d}\) is strictly decreasing in its own regret \(\mathcal R_m^d\), holding all other regrets fixed. Finally, as \(\gamma_d\downarrow0\), \(DA_m^{(1),d}\) converges to the normalized SMAA rank-1 acceptability \(RA_m(1)/\sum_{\ell\in\mathcal A_d}RA_\ell(1)\); when \(\mathcal A_d=\mathcal M\), this is the ordinary SMAA rank-1 acceptability.
\end{lemma}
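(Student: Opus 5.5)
The plan is a direct computation from the Gibbs form in Definition~4. Write \(w_\ell=RA_\ell(1)\exp\{-\gamma_d\mathcal R_\ell^d\}\) for \(\ell\in\mathcal A_d\) and \(Z=\sum_{\ell\in\mathcal A_d}w_\ell\), so that \(DA_m^{(1),d}=w_m/Z\).

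\emph{Simplex membership.} The exponential factor is strictly positive and \(RA_\ell(1)\ge0\), so every \(w_\ell\ge0\). Moreover \(w_\ell>0\) exactly when \(RA_\ell(1)>0\). The assumption \(\sum_\ell RA_\ell(1)>0\) therefore forces \(Z>0\), and the ratio is well defined. Nonnegativity of each \(w_m/Z\) is then immediate, and the coordinates sum to \(Z/Z=1\).

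\emph{Strict monotonicity.} Fix all regrets other than \(\mathcal R_m^d\) and write \(x=\mathcal R_m^d\). Let \(C=Z-w_m=\sum_{\ell\neq m}w_\ell\), which does not depend on \(x\), and set \(a=RA_m(1)\). Then
\[
DA_m^{(1),d}=\frac{a e^{-\gamma_d x}}{a e^{-\gamma_d x}+C},
\qquad
\frac{\partial DA_m^{(1),d}}{\partial x}=-\gamma_d\,DA_m^{(1),d}\bigl(1-DA_m^{(1),d}\bigr).
\]
The derivative formula follows from the quotient rule, or by noting that \(DA_m^{(1),d}\) is the logistic function of \(\log a-\gamma_d x-\log C\).

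This step needs the only real care in the argument: the hypothesis \(0<DA_m^{(1),d}<1\) must be translated into \(a>0\) and \(C>0\). Indeed, \(DA_m^{(1),d}>0\) gives \(a>0\), and \(DA_m^{(1),d}<1\) gives \(C>0\). Both conditions persist for every value of \(x\), because neither \(a\) nor \(C\) depends on \(x\). The derivative is therefore strictly negative for all \(x\), since \(\gamma_d>0\), and strict decrease follows. Alternatively, one can avoid calculus: \(x\mapsto a e^{-\gamma_d x}\) is strictly decreasing, and \(y\mapsto y/(y+C)\) is strictly increasing on \(y>0\) when \(C>0\).

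\emph{Limit \(\gamma_d\downarrow0\).} The set \(\mathcal A_d\) is finite and the regrets are fixed real numbers, so \(\exp\{-\gamma_d\mathcal R_\ell^d\}\to1\) for each \(\ell\). Hence the numerator tends to \(RA_m(1)\) and the denominator tends to \(\sum_\ell RA_\ell(1)>0\). Continuity of the quotient at a positive denominator gives the stated limit.

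When \(\mathcal A_d=\mathcal M\), the denominator is \(\sum_{\ell\in\mathcal M}RA_\ell(1)\). By Definition~2, \(RA_\ell(1)\) is the probability that \(\ell\) receives rank 1. Assuming each state \((b,q)\) assigns rank 1 to exactly one model, with ties broken by the rank operator, these events partition the sample space, so the sum equals \(1\). The limit is then \(RA_m(1)\) itself. I would state this tie-breaking convention explicitly, since it is the one point not fixed by the definitions.
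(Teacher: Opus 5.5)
Your proposal is correct and follows the same route as the paper's proof. Both argue simplex membership from a nonnegative numerator over a positive denominator, strict decrease from the logistic derivative \(\partial DA_m^{(1),d}/\partial\mathcal R_m^d=-\gamma_d DA_m^{(1),d}(1-DA_m^{(1),d})\), and the limit from \(\exp\{-\gamma_d\mathcal R_\ell^d\}\to1\). Your additional points are refinements that the paper leaves implicit: that \(0<DA_m^{(1),d}<1\) is equivalent to \(RA_m(1)>0\) and \(\sum_{\ell\neq m}w_\ell>0\), so it holds at every value of the regret, and that identifying the limit with ordinary rank-1 acceptability requires \(\sum_{\ell\in\mathcal M}RA_\ell(1)=1\), which depends on a tie-breaking convention in the rank operator.
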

\begin{proof}
The numerator is nonnegative and the denominator is positive, so the components are nonnegative and sum to one. Writing \(p_m=DA_m^{(1),d}\), differentiation with respect to \(\mathcal R_m^d\) gives \(\partial p_m/\partial \mathcal R_m^d=-\gamma_d p_m(1-p_m)<0\) whenever \(0<p_m<1\). The limit follows from \(\exp\{-\gamma_d\mathcal R_m^d\}\to1\) for each finite regret.
\end{proof}

\begin{proposition}[Entropic characterization of the deployment-adjusted index]\label{prop:da-entropic}
Fix a downstream design \(d\) and a comparison set \(\mathcal A_d\) with \(\sum_{m\in\mathcal A_d}RA_m(1)>0\). Let \(S_d=\{m\in\mathcal A_d:RA_m(1)>0\}\), define the normalized SMAA rank-1 prior \(q_m=RA_m(1)/\sum_{\ell\in S_d}RA_\ell(1)\) on \(S_d\), and let \(\tau_d>0\). Then the deployment-adjusted rank-1 index \(DA^{(1),d}\) of Definition~4 is the unique solution, on \(\Delta(S_d)\), of the entropically regularized regret problem
\[
\min_{p\in\Delta(S_d)}\ \sum_{m\in S_d}p_m\mathcal R_m^d+\tau_d\operatorname{KL}(p\|q),
\qquad
\operatorname{KL}(p\|q)=\sum_{m\in S_d}p_m\log\frac{p_m}{q_m}.
\]
Models outside \(S_d\) receive zero mass. The temperature \(\tau_d\) trades off minimizing expected deployment regret against staying close to the preference-robust SMAA prior: as \(\tau_d\to\infty\), the solution returns to \(q\); as \(\tau_d\downarrow0\), it concentrates, with \(q\)-proportional tie-breaking, on the minimum-regret model or models in \(S_d\).
\end{proposition}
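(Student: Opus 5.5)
The plan is the standard Gibbs variational argument, applied to the finite simplex \(\Delta(S_d)\) with \(\gamma_d=1/\tau_d\) as in Definition~4.

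\textbf{Step 1 (why the restriction to \(S_d\) loses nothing).} I first note that for \(m\in\mathcal A_d\setminus S_d\) the numerator \(RA_m(1)\exp\{-\gamma_d\mathcal R_m^d\}\) in Definition~4 vanishes. So \(DA_m^{(1),d}=0\) off \(S_d\), and the denominator reduces to a sum over \(S_d\). Dividing numerator and denominator by \(\sum_{\ell\in S_d}RA_\ell(1)>0\) gives
\[
DA_m^{(1),d}=\frac{q_m e^{-\mathcal R_m^d/\tau_d}}{Z},\qquad Z=\sum_{\ell\in S_d}q_\ell e^{-\mathcal R_\ell^d/\tau_d},
\]
on \(S_d\). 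Call this distribution \(p^\star\).

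\textbf{Step 2 (the key identity).} For any \(p\in\Delta(S_d)\), I substitute \(\log q_m=\log p^\star_m+\mathcal R_m^d/\tau_d+\log Z\), which is valid because \(q_m>0\) on \(S_d\). This yields the exact identity
\[
\sum_m p_m\mathcal R_m^d+\tau_d\operatorname{KL}(p\|q)=\tau_d\operatorname{KL}(p\|p^\star)-\tau_d\log Z .
\]
The convention \(0\log 0=0\) covers the boundary of the simplex.

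\textbf{Step 3 (optimality and uniqueness).} By Gibbs' inequality, \(\operatorname{KL}(p\|p^\star)\ge0\), with equality iff \(p=p^\star\). Hence \(p^\star\) is the unique minimizer, with optimal value \(-\tau_d\log Z\). This is the step that needs the most care: one must check that all terms are finite on \(S_d\), which holds because \(q>0\) there. An alternative route is strict convexity of KL plus a Lagrangian stationarity computation, but the identity above already gives uniqueness, so I will use it.

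\textbf{Step 4 (the two limits).} These are elementary once the Gibbs form is in hand.
\begin{itemize}
\item As \(\tau_d\to\infty\), each factor \(e^{-\mathcal R_m^d/\tau_d}\to1\) because every regret is finite. So \(p^\star\to q\), consistent with Lemma~\ref{lem:deployment-adjusted-properties}.
\item As \(\tau_d\downarrow0\), let \(\mathcal R^\ast=\min_{S_d}\mathcal R_m^d\) and multiply numerator and denominator by \(e^{\mathcal R^\ast/\tau_d}\). Terms with \(\mathcal R_m^d>\mathcal R^\ast\) then decay to zero, while the minimizers \(M^\ast\) keep the factor \(1\).
\end{itemize}
In the second limit, \(p^\star_m\to q_m/\sum_{\ell\in M^\ast}q_\ell\) for \(m\in M^\ast\) and \(p^\star_m\to0\) otherwise. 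This is exactly the \(q\)-proportional tie-breaking on the minimum-regret set claimed in the statement.
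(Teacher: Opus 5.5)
Your proof is correct, but it follows a different route from the paper's.

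The paper argues through convex optimality conditions. It notes that the objective is strictly convex on \(\Delta(S_d)\) and writes the stationarity condition \(\mathcal R_m^d+\tau_d(\log(p_m/q_m)+1)+\mu=0\), using a single multiplier \(\mu\) for \(\sum_m p_m=1\). Solving this gives \(p_m\propto q_m e^{-\mathcal R_m^d/\tau_d}\), which it then normalizes; uniqueness comes from strict convexity.

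You instead take the Gibbs distribution \(p^\star\) directly from Definition~4. You verify it through the exact free-energy identity \(\sum_m p_m\mathcal R_m^d+\tau_d\operatorname{KL}(p\|q)=\tau_d\operatorname{KL}(p\|p^\star)-\tau_d\log Z\). Gibbs' inequality then gives optimality and uniqueness in one stroke.

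Your route has several advantages:
\begin{itemize}
\item It never needs the nonnegativity constraints \(p_m\ge 0\). The paper's stationarity condition uses only the equality multiplier and handles those constraints implicitly. Its argument tacitly relies on the solved point being strictly positive, so that it is a KKT point of a convex problem and hence a global minimizer.
\item It treats boundary points of the simplex through the convention \(0\log 0=0\), with no differentiability issues.
\item It also yields the optimal value \(-\tau_d\log Z\) and the exact suboptimality gap \(\tau_d\operatorname{KL}(p\|p^\star)\) for any other \(p\).
\end{itemize}

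In exchange, the paper's Lagrangian route derives the Gibbs form rather than requiring it in advance, and it extends mechanically to other separable convex regularizers.

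Your handling of the two temperature limits matches the paper's argument. It is somewhat more explicit: rescaling by \(e^{\mathcal R^\ast/\tau_d}\) makes the \(q\)-proportional tie-breaking as \(\tau_d\downarrow 0\) transparent.
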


This is the finite-dimensional Gibbs/free-energy form underlying exponential weighting and entropic regularization \citep{CesaBianchiLugosi2006}. In the present setting, the SMAA rank-1 distribution supplies the prior \(q\), realized constrained-portfolio regret supplies the loss, and \(\tau_d\) is the KL regularization strength.

\begin{problem}[Preference-robust architecture selection]
Given bootstrap criteria \(V_m^{(b)}\), a preference distribution \(Q\) on the simplex, and a downstream decision map \(D\) from forecasts to realized net performance, estimate the rank acceptability profile \(RA_m(r)\), identify preference regions in which each architecture is admissible, and compare the criteria-based ranking with the decision ranking induced by \(J_m^d=D(s^{(m)})\).
\end{problem}

\begin{problem}[Daily constrained portfolio]\label{prob:daily-qp}
For model \(m\) and trading day \(t\), the portfolio solves
\[
\max_w \; w^\top s_t^{(m)}-\lambda_{tc}\|w-w_{t-1}\|_1-\lambda_{risk}w^\top\widehat{D}_t w,
\]
subject to dollar neutrality, leverage \(\|w\|_1\leq 2\), single-name capacity \(|w_i|\leq 0.01 DV_i/AUM\), beta neutrality, and industry net exposure limits, where \(DV_i\) is dollar volume and \(\widehat{D}_t\) is a diagonal risk estimate. Before entering the optimizer, each model score is centered and scaled within the trading-day cross-section, \(s_{i,t}=(\mu_{i,t}-\bar\mu_t)/\sigma_t\); the portfolio problem therefore uses rank-information in a cardinal but daily standardized unit rather than raw neural-network or regression output. The main specification uses \(AUM=100\) million, a 1\% dollar-volume cap, \(|w^\top\beta|\leq0.05\), and industry net exposure at 0.02. Portfolio returns are measured on the next trading day after the weights are set. The baseline and higher-turnover-penalty constrained-QP rows use one-way 20 bps costs, computed as \(r_t^{\mathrm{net}}=r_t^{\mathrm{gross}}-0.002\,TO_t\); the 50 bps stress uses \(r_t^{\mathrm{gross}}-0.005\,TO_t\).
\end{problem}

\begin{proposition}[Interpretive bridge: robust score ambiguity as weight shrinkage]\label{thm:robust-shrinkage}
This proposition records the standard robustification--regularization equivalence specialized to portfolio scores; see \cite{MohajerinEsfahaniKuhn2018,BertsimasCopenhaver2018,ShafieezadehAbadehKuhnEsfahani2019}. Let \(\mathcal W\) be a compact feasible portfolio set and let \(c(w)\) be a continuous convex penalty on \(\mathcal W\). For an ellipsoidal score ambiguity set \(\mathcal U_2(\rho)=\{s+u:\|u\|_2\leq\rho\}\), the robust allocation problem satisfies
\[
\max_{w\in\mathcal W}\inf_{\tilde s\in\mathcal U_2(\rho)}\{w^\top\tilde s-c(w)\}
=
\max_{w\in\mathcal W}\{w^\top s-c(w)-\rho\|w\|_2\}.
\]
For a distributional ambiguity set \(\{P:W_1(P,P_0)\leq \varepsilon\}\) over all finite-first-moment laws on the score space, with no additional support restriction, ground metric induced by a norm \(\|\cdot\|\), empirical law \(P_0\) of a score-vector random variable \(\xi\), and mean \(\bar s=\E_{P_0}[\xi]\),
\[
\inf_{P:W_1(P,P_0)\leq \varepsilon}\E_P[w^\top\xi]
= w^\top \bar s-\varepsilon\|w\|_*,
\]
where \(\|\cdot\|_*\) is the dual norm. Hence score robustness is equivalent to an additional portfolio-norm penalty. The robust value is convex and nonincreasing in the radius. If \(c\) is strictly convex on the affine hull of \(\mathcal W\), for example because the quadratic risk term is positive definite on that subspace, then the optimizer is unique and \(\|w^*(\rho)\|_2\) is nonincreasing in \(\rho\).
\end{proposition}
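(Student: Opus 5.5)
The plan is to prove the four claims of Proposition~\ref{thm:robust-shrinkage} in order: the ellipsoidal identity, the Wasserstein identity, the radius monotonicity of the value, and the monotonicity of the optimizer norm. In each case the argument reduces to computing an inner infimum in closed form and then reading off the structure.

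\emph{Ellipsoidal case.} For fixed \(w\), \(\inf_{\|u\|_2\leq\rho}w^\top(s+u)=w^\top s-\rho\|w\|_2\). This follows from Cauchy--Schwarz, with equality at \(u=-\rho w/\|w\|_2\) when \(w\neq0\), or at any \(u\) when \(w=0\). The term \(c(w)\) does not depend on \(u\), so subtracting it commutes with the infimum. Taking \(\max_{w\in\mathcal W}\) on both sides gives the first identity. The maximum is attained because the objective is continuous and \(\mathcal W\) is compact.

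\emph{Wasserstein case.} This step needs the most care, and I would split it into two inequalities.
\begin{itemize}
\item For the lower bound, take any coupling \(\pi\) of \(P\) and \(P_0\) with \(\E_\pi\|\xi'-\xi\|\leq\varepsilon\). Then \(\E_P[w^\top\xi']-w^\top\bar s=\E_\pi[w^\top(\xi'-\xi)]\geq-\|w\|_*\E_\pi\|\xi'-\xi\|\geq-\varepsilon\|w\|_*\). Here \(w\mapsto w^\top\xi\) is \(\|w\|_*\)-Lipschitz with respect to \(\|\cdot\|\), and the expectations are finite because only finite-first-moment laws are allowed.
\item For attainment, pick \(v\) with \(\|v\|=1\) and \(w^\top v=\|w\|_*\); such a \(v\) exists in finite dimension. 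Take \(P\) to be the law of \(\xi-\varepsilon v\) under \(P_0\). The deterministic shift coupling gives \(W_1(P,P_0)\leq\varepsilon\), and \(\E_P[w^\top\xi]=w^\top\bar s-\varepsilon\|w\|_*\).
\end{itemize}
The hypothesis that there is no support restriction is exactly what makes this shift admissible, so the infimum is attained and equals the stated value.

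\emph{Monotonicity and convexity of the value.} Write \(V(\rho)=\max_{w\in\mathcal W}\{w^\top s-c(w)-\rho\|w\|_2\}\). For each fixed \(w\), the objective is affine and nonincreasing in \(\rho\). A pointwise maximum of affine functions is convex, and a pointwise maximum of nonincreasing functions is nonincreasing. The same argument applies with \(\varepsilon\) in place of \(\rho\).

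\emph{Uniqueness and monotone norm.} If \(c\) is strictly convex on the affine hull of \(\mathcal W\), then \(-c\) is strictly concave there. The remaining terms are concave: \(w^\top s\) is linear and \(-\rho\|w\|_2\) is concave. So the objective is strictly concave on the convex compact set \(\mathcal W\), which gives existence and uniqueness of \(w^*(\rho)\). For \(\rho_1<\rho_2\), write \(w_i=w^*(\rho_i)\) and \(f(w)=w^\top s-c(w)\). Optimality of each solution gives
\[
f(w_1)-\rho_1\|w_1\|_2\geq f(w_2)-\rho_1\|w_2\|_2,
\qquad
f(w_2)-\rho_2\|w_2\|_2\geq f(w_1)-\rho_2\|w_1\|_2.
\]
Adding the two inequalities yields \((\rho_2-\rho_1)(\|w_1\|_2-\|w_2\|_2)\geq0\), hence \(\|w_2\|_2\leq\|w_1\|_2\).

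The main obstacle is the Wasserstein equality, specifically the attainment half. It depends on the unrestricted support and on choosing a norm-attaining direction for the dual norm. If the paper intended convexity of \(\mathcal W\), that assumption must be made explicit for uniqueness; I would state it, since compactness alone does not suffice.
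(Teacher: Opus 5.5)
Your proof is correct. For the first three claims it follows the paper's route.
\begin{itemize}
\item The ellipsoidal identity is the support function of the Euclidean ball.
\item The Wasserstein lower bound is the Lipschitz half of Kantorovich--Rubinstein, which your coupling inequality proves directly.
\item Attainment is the deterministic shift along a norming direction: you shift by $-\varepsilon v$ with $w^\top v=\|w\|_*$, the paper by $+\varepsilon v$ with $w^\top v=-\|w\|_*$.
\item Convexity and monotonicity of the value follow because it is a supremum of affine, nonincreasing functions of the radius.
\end{itemize}
Two small repairs are needed. First, the map that is $\|w\|_*$-Lipschitz is $\xi\mapsto w^\top\xi$, not $w\mapsto w^\top\xi$. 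Second, your coupling step presupposes a coupling of cost at most $\varepsilon$. Such a coupling exists because optimal transport plans for $W_1$ exist; alternatively, use cost $\varepsilon+\delta$ and let $\delta\downarrow0$.

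The genuine difference is in the last claim. The paper gets $\|w^*(\rho)\|_2$ nonincreasing from Danskin's theorem, $V'(\rho)=-\|w^*(\rho)\|_2$, together with convexity of $V$, so that $V'$ is nondecreasing. That route needs a unique maximizer to make $V$ differentiable. In return it gives an envelope identity linking the slope of the robust value to the optimizer's norm.

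Your exchange argument adds the two optimality inequalities to get $(\rho_2-\rho_1)(\|w_1\|_2-\|w_2\|_2)\geq0$, and it is purely order-theoretic. It holds for every selection of maximizers and needs neither differentiability, nor uniqueness, nor convexity of $\mathcal W$. So the monotone-norm conclusion survives without the strict-convexity hypothesis, which is then needed only for uniqueness.

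Your caveat on uniqueness is also right. Strict concavity of the objective gives a unique maximizer only on a convex feasible set, and the paper's proof uses this tacitly. It holds in the application, where the feasible set of Problem~\ref{prob:daily-qp} is a polytope.
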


\begin{proposition}[Interpretive bridge: value sensitivity to score disagreement]\label{thm:regret-lipschitz}
This proposition records a standard parametric optimization sensitivity result, written here for the constrained portfolio geometry; see Bonnans and Shapiro~\cite{BonnansShapiro2000}. Let
\[
J_c(s)=\max_{w\in\mathcal W}\{w^\top s-c(w)\},
\]
where \(\mathcal W\subseteq\{w:\|w\|_1\leq L\}\). For a fixed downstream penalty \(c\),
\[
|J_c(s)-J_c(s')|\leq L\|s-s'\|_\infty .
\]
More generally, for two downstream penalty states \(c\) and \(\tilde c\) on the same feasible set, define \(\Delta(c,\tilde c)=\sup_{w\in\mathcal W}|c(w)-\tilde c(w)|\). Then
\[
|J_c(s)-J_{\tilde c}(s')|
\leq L\|s-s'\|_\infty+\Delta(c,\tilde c).
\]
In the empirical portfolio design, \(L=2\). For a finite set of model score vectors \(\{s^{(m)}:m\in\mathcal M_d\}\) evaluated under the same downstream state \(c\), the ex-ante optimization-value regret
\[
\mathcal R_J(m)=\max_{\ell\in\mathcal M_d}J_c(s^{(\ell)})-J_c(s^{(m)})
\]
satisfies
\[
0\leq \mathcal R_J(m)\leq L\max_{\ell\in\mathcal M_d}\|s^{(\ell)}-s^{(m)}\|_\infty .
\]
If model \(\ell\) is evaluated under penalty state \(c_\ell\), the same argument gives
\[
\max_{\ell\in\mathcal M_d}J_{c_\ell}(s^{(\ell)})-J_{c_m}(s^{(m)})
\leq \max_{\ell\in\mathcal M_d}\{L\|s^{(\ell)}-s^{(m)}\|_\infty+\Delta(c_\ell,c_m)\}.
\]
For a sequence of dates with the same leverage bound,
\[
\frac{1}{T}\sum_{t=1}^T |J_{t,c_t}(s_t)-J_{t,c_t}(s_t')|
\leq \frac{L}{T}\sum_{t=1}^T\|s_t-s_t'\|_\infty .
\]
This is an ex-ante optimization-value sensitivity result. The realized net-Sharpe regret reported in the empirical tables is a downstream diagnostic computed from realized returns, and is not asserted to satisfy this deterministic bound without additional return and denominator assumptions.
\end{proposition}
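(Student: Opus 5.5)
The proof splits into three independent parts. The first is the uniform Lipschitz bound in the score for a fixed penalty, together with its perturbed-penalty extension. The second is the regret bounds, which follow by specialization. The third is the time-averaged version, obtained by summation. All of it uses only the leverage constraint \(\mathcal W\subseteq\{w:\|w\|_1\leq L\}\) and Hölder's inequality.

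\textbf{Core bound.} Fix \(c\), \(\tilde c\) and scores \(s,s'\). I will assume that the maxima defining \(J_c\) and \(J_{\tilde c}\) are attained. This holds if \(\mathcal W\) is compact and the penalties are continuous, as in the empirical QP. Otherwise the same argument runs with \(\epsilon\)-optimal points, letting \(\epsilon\downarrow0\). Let \(w^*\) attain \(J_c(s)\). Since \(w^*\) is feasible for the \(\tilde c\)-problem,
\[
J_c(s)-J_{\tilde c}(s')\leq \{w^{*\top}s-c(w^*)\}-\{w^{*\top}s'-\tilde c(w^*)\}
= w^{*\top}(s-s')+\tilde c(w^*)-c(w^*).
\]
By Hölder, \(|w^{*\top}(s-s')|\leq\|w^*\|_1\|s-s'\|_\infty\leq L\|s-s'\|_\infty\). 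The penalty gap is at most \(\Delta(c,\tilde c)\). Running the symmetric argument with an optimizer of \(J_{\tilde c}(s')\) bounds the other direction. Together these give \(|J_c(s)-J_{\tilde c}(s')|\leq L\|s-s'\|_\infty+\Delta(c,\tilde c)\). Setting \(\tilde c=c\), so that \(\Delta=0\), recovers the fixed-penalty statement. The only point needing care is that \(\Delta\) must be a finite supremum over the same set \(\mathcal W\). Comparing values at one common feasible point is exactly what makes this work, so I would state that requirement explicitly.

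\textbf{Regret bounds.} Nonnegativity of \(\mathcal R_J(m)\) is immediate, because \(m\) itself belongs to \(\mathcal M_d\). For the upper bound, let \(\ell^*\) attain the finite maximum. Then
\[
\mathcal R_J(m)=J_c(s^{(\ell^*)})-J_c(s^{(m)})\leq L\|s^{(\ell^*)}-s^{(m)}\|_\infty\leq L\max_{\ell}\|s^{(\ell)}-s^{(m)}\|_\infty.
\]
The heterogeneous-penalty version follows the same way. For each \(\ell\), the difference \(J_{c_\ell}(s^{(\ell)})-J_{c_m}(s^{(m)})\) is bounded by \(L\|s^{(\ell)}-s^{(m)}\|_\infty+\Delta(c_\ell,c_m)\) using the core bound, and we then take the maximum over \(\ell\).

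\textbf{Dates.} For each \(t\), the per-date feasible set \(\mathcal W_t\) lies in the same \(\ell_1\) ball. The fixed-penalty bound therefore gives \(|J_{t,c_t}(s_t)-J_{t,c_t}(s'_t)|\leq L\|s_t-s'_t\|_\infty\), with the same penalty applied to both score vectors. Averaging over \(t\) yields the stated inequality.

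I do not expect a genuine obstacle. The only delicate points are attainment of the maxima and finiteness of \(\Delta\), both handled above. The closing remark about realized net-Sharpe regret is a disclaimer and needs no proof.
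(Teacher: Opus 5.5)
Your proposal is correct and follows the paper's proof essentially step for step: evaluate both objectives at a maximizer of one problem, bound the score term by H\"older with the $\ell_1$ leverage bound and the penalty gap by $\Delta$, and symmetrize; then specialize to the argmax model for the regret bound, take the maximum over pairwise bounds for heterogeneous penalties, and average date by date. Your remarks on attainment via $\epsilon$-optimal points, finiteness of $\Delta$, and nonnegativity of $\mathcal R_J(m)$ from $m\in\mathcal M_d$ are small refinements that the paper leaves implicit.
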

\begin{remark}[Scope of Propositions~\ref{thm:robust-shrinkage} and~\ref{thm:regret-lipschitz}]
Proposition~\ref{thm:robust-shrinkage} is a structural equivalence for score ambiguity sets without additional support restrictions; adding box, sign, or market-support restrictions changes the support function and therefore the induced penalty. Proposition~\ref{thm:regret-lipschitz} is an optimization-value sensitivity result under a fixed downstream state, with an explicit \(\Delta(c,\tilde c)\) term when the transaction-cost or risk state changes. The realized net-Sharpe regret tables are empirical diagnostics of the downstream portfolio path, not deterministic corollaries of the Lipschitz bound.
\end{remark}

\begin{proposition}[Turnover-weight and turnover-penalty alignment]\label{thm:turnover-alignment}
This elementary alignment result is specific to the turnover coordinate used in the paper; the monotone comparative-static interpretation follows the increasing-differences logic of Topkis~\cite{Topkis1998}. Consider a finite set of architectures \(\mathcal M\) on a one-dimensional turnover slice of the preference simplex. Let \(a_m\) denote the aggregate non-turnover score of architecture \(m\), let \(\tau_m\) denote its realized turnover proxy, and suppose the oriented low-turnover criterion is an affine normalization
\[
c_m=b-\kappa\tau_m,\qquad \kappa>0,
\]
with the same \(b\) and \(\kappa\) for all architectures. For \(u\in[0,1)\), define the preference-layer utility
\[
U_m(u)=(1-u)a_m+u c_m
\]
and the portfolio-layer proxy
\[
\Pi_m(\lambda)=a_m-\lambda\tau_m.
\]
Then ranking architectures by \(U_m(u)\) is exactly equivalent to ranking them by \(\Pi_m(\lambda(u))\) under the monotone penalty map
\[
\lambda(u)=\kappa\frac{u}{1-u}.
\]
Consequently, for any pair \(i,j\) with \(a_i>a_j\) and \(\tau_i>\tau_j\), the common switch occurs at
\[
u^*_{ij}=\frac{a_i-a_j}{(a_i-a_j)+\kappa(\tau_i-\tau_j)},
\qquad
\lambda^*_{ij}=\frac{a_i-a_j}{\tau_i-\tau_j}
=\kappa\frac{u^*_{ij}}{1-u^*_{ij}}.
\]
The limit \(u\uparrow1\) corresponds to \(\lambda\to\infty\), where both rankings are governed by turnover alone.
\end{proposition}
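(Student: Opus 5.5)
The plan is to reduce everything to one algebraic identity. Substitute \(c_m=b-\kappa\tau_m\) into \(U_m(u)\):
\[
U_m(u)=(1-u)a_m+ub-u\kappa\tau_m=(1-u)\Bigl(a_m-\kappa\tfrac{u}{1-u}\tau_m\Bigr)+ub .
\]
With \(\lambda(u)=\kappa u/(1-u)\) this reads
\[
U_m(u)=(1-u)\,\Pi_m(\lambda(u))+ub .
\]
For \(u\in[0,1)\) the factor \(1-u\) is strictly positive, and the shift \(ub\) is the same for every architecture because \(b\) and \(\kappa\) are common. Hence \(m\mapsto U_m(u)\) is a strictly increasing affine transformation of \(m\mapsto\Pi_m(\lambda(u))\), uniformly in \(m\). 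It follows that for every pair \(i,j\),
\[
U_i(u)-U_j(u)=(1-u)\{\Pi_i(\lambda(u))-\Pi_j(\lambda(u))\},
\]
so strict preferences and ties coincide, and the two rankings on the finite set \(\mathcal M\) are identical. I will also check that \(\lambda(u)\) is continuous and strictly increasing from \(\lambda(0)=0\) to \(\lambda\to\infty\) as \(u\uparrow1\), with derivative \(\kappa/(1-u)^2>0\). This makes \(\lambda\) a bijection \([0,1)\to[0,\infty)\), so every portfolio penalty corresponds to exactly one slice weight.

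For the switch points, take \(a_i>a_j\) and \(\tau_i>\tau_j\). First solve \(\Pi_i(\lambda)=\Pi_j(\lambda)\), which is linear in \(\lambda\) and gives \(\lambda^*_{ij}=(a_i-a_j)/(\tau_i-\tau_j)>0\). Next solve \(U_i(u)=U_j(u)\):
\[
(1-u)(a_i-a_j)=u\kappa(\tau_i-\tau_j),
\]
which gives the stated \(u^*_{ij}\in(0,1)\). The relation \(\lambda^*_{ij}=\kappa u^*/(1-u^*)\) then follows by substitution, since \(u^*/(1-u^*)=(a_i-a_j)/(\kappa(\tau_i-\tau_j))\). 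It is also implied by the pairwise identity above, because a zero of one difference is a zero of the other. Both differences are strictly monotone in their parameter, so the switch is unique, and \(i\) is preferred exactly below it.

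For the limit statement, I will divide \(\Pi_m(\lambda)\) by \(\lambda\). This gives \(a_m/\lambda-\tau_m\to-\tau_m\), so for \(\lambda\) large, every pair with \(\tau_i\neq\tau_j\) is ordered by lower turnover. Finiteness of \(\mathcal M\) supplies a single threshold that works for all pairs. Equivalently, \(U_m(u)\to c_m\) as \(u\uparrow1\).

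I do not expect a genuine obstacle; the only care needed is bookkeeping. The factor \(1-u\) must be strictly positive, which is why \(u=1\) is excluded. Turnover ties in the limit must be handled explicitly: they are ordered by \(a_m\) for finite \(\lambda\), and this does not contradict "governed by turnover alone", which concerns only pairs with distinct turnover.
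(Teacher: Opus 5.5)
Your proposal is correct and follows the paper's proof essentially step for step. The paper also factors the pairwise difference as \(U_i(u)-U_j(u)=(1-u)\{(a_i-a_j)-\lambda(u)(\tau_i-\tau_j)\}\) and uses \(1-u>0\) to transfer signs and ties. It then solves the two linear switch equations and checks the relation by substitution, and argues the \(u\uparrow1\) limit from the vanishing of \((1-u)a_m\). Your model-level identity \(U_m(u)=(1-u)\Pi_m(\lambda(u))+ub\), the bijectivity of \(\lambda\), and the explicit treatment of turnover ties are harmless refinements of that same argument.
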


\begin{remark}[Scope of Proposition~\ref{thm:turnover-alignment}]
The alignment result is exact for any finite model set on a turnover slice: the six non-turnover criteria are first aggregated into \(a_m\), and the remaining preference coordinate varies the low-turnover criterion. In the full seven-criterion simplex, all non-turnover weights can move at the same time, so the result should be read as a slice-wise ranking equivalence and a local comparison-static diagnostic rather than as a global equivalence claim. The empirical Kendall association between SMAA ranks and constrained-portfolio ranks is therefore interpreted as finite-sample evidence for this alignment channel, not as an identity between the full SMAA simplex and the downstream portfolio problem.
\end{remark}
\noindent\textbf{Algorithm 1 (preference-robust ranking).} Normalize the seven oriented criteria, draw moving-block bootstrap samples for criteria uncertainty, draw Dirichlet preference vectors over the simplex, compute \(U_m^{(b,q)}\) for each architecture, and estimate rank acceptability, central weight vectors, and preference-region boundaries from the resulting rank distribution.

\noindent\textbf{Algorithm 2 (portfolio layer).} Form the fixed five-model follow-through set described in Section~\ref{sec:data}, solve Problem~\ref{prob:daily-qp} for each retained architecture and date, compute realized next-day net returns, and summarize the downstream layer by net Sharpe, realized regret, robust score valuation, and deployment-adjusted acceptability.

The complexity of the SMAA layer is \(O(BQ|\mathcal M|K)\) for \(B\) resamples, \(Q\) preference draws, \(|\mathcal M|\) architectures, and \(K\) criteria. The portfolio layer solves one convex quadratic program per promoted model and date; with diagonal risk and linear constraints, the computational burden is dominated by repeated sparse first-order QP solves rather than by the SMAA sampling itself. Proofs of Propositions~\ref{prop:da-entropic}--\ref{thm:turnover-alignment} are collected in Appendix~A.

\section{Data and Benchmark Design}
\label{sec:data}
The benchmark is built from a daily CRSP common-stock panel over 2018--2024. After price, exchange, share-code, and liquidity filters, the raw panel contains 4,862,011 stock-date rows, 5,451 assets, and 1,761 dates before sequence construction. Feature engineering retains twenty-four usable predictors and produces 4,551,500 rolling samples. All benchmark outcomes are reconstructed from stock-level forecasts on the common intersection of retained prediction files, yielding 1,197 shared evaluation dates from 2020-03-30 to 2024-12-30.

{\TableSetup\input{table_sample_protocol_combined_longtable.tex}}

The confirmatory period is fixed at 2021-01-04 to 2024-12-30 under next-date alignment. It contains 2,777,135 stock-date observations, 4,410 assets, and 1,004 trading days. Three nested signal universes diagnose the ranking mechanism. F1 contains price-path and quote-geometry variables. F2 adds turnover, log dollar volume, and log number of trades. F3 adds market capitalization, rolling beta, and market-relative excess-return transforms. Appendix~\ref{app:features-combos} gives full feature definitions and predictor summary statistics.

The five-model follow-through set is fixed before the confirmatory and constrained-portfolio analyses. It is not the set of every positive-gross architecture in Table~\ref{tab:masterbenchmark}. Four models enter because they have full 1,197-day coverage, positive common-window gross long--short Sharpe, and positive decile net Sharpe at 20 bps: TS-RIDGE, LSTM, TransEnc-8, and TransEnc-10. TS-OLS is added as a prespecified no-shrinkage linear comparator. Positive-gross but cost-fragile architectures, including PatchTST-8, GRU, TSMixer-10, and NBEATS, remain in the 15-model benchmark, SMAA, and supplementary light diagnostics; they do not enter the constrained-QP follow-through set. Thus, throughout the paper, ``promoted'' means this fixed follow-through set. Within it, TS-RIDGE anchors the linear-shrinkage comparison, TS-OLS isolates the gain from shrinkage, and the recurrent and transformer-encoder models provide nonlinear sequence alternatives with lower turnover.
\begin{table}[!htbp]
\centering
\TableSetup
\caption{Architecture families in the retained model universe. The labels describe the implemented models rather than legacy registry keys in the replication scripts.}
\label{tab:modeluniversemain}
\FitTable{\input{table_model_universe_main.tex}}
\end{table}
The deep-learning models are canonical implementations inside a common walk-forward protocol: LSTM, GRU, tanh RNN, plain transformer encoders, PatchTST variants, N-BEATS, TSMixer, a mixture-of-MLPs, and a graph-LSTM. They should not be read as a benchmark of named frontier systems such as Informer, Autoformer, or FEDformer. The source registry keys are early scaffolding labels inherited from the experiment harness and bear no relation to model selection or to the architectures actually estimated; every evaluated model is a canonical implementation of the architecture family named by its manuscript label, not the frontier system whose name the registry key happens to reuse. Appendix~\ref{app:implementation} reports the script-level registry keys and manuscript labels.

\section{Common-Window Results and Inference}
\label{sec:commonwindow}
Of fifteen retained architectures, only a small subset survives the joint screen of gross performance, dependent bootstrap inference, reconstructed transaction costs, value-weighted replication, and factor-alpha diagnostics. Table~\ref{tab:masterbenchmark} reports these objects for every model under the same 1,197-date evaluation window. The alpha column is annualized and uses Newey--West standard errors with a 20-day lag.

\begin{table}[!htbp]
\centering
\TableSetup
\caption{Master common-window benchmark, sorted by gross Sharpe.}
\label{tab:masterbenchmark}
\FitTable{\input{table_master_benchmark.tex}}
\end{table}

TS-RIDGE, TS-OLS, and LSTM have positive gross bootstrap intervals, but cost adjustment changes the economic ranking. TS-OLS loses its equal-weight edge once transaction costs and value-weighting are imposed; LSTM remains a nonlinear price-path contrast, whereas TransEnc-8 and TransEnc-10 stay economically relevant because their slower portfolio rotation makes the implementation layer less punitive.

The large gross Sharpe ratios should be read through the mechanics of the design. A daily cross-sectional long--short sort over thousands of stocks combines broad diversification, frequent rebalancing, and the 2020--2024 dispersion regime, raising attainable pre-cost Sharpe. The unusually high value-weighted Sharpe for TS-RIDGE is a diagnostic property of the raw decile sort: value-weighting concentrates the extreme-score legs in larger, lower-volatility names while preserving the return spread. Gross Sharpe is a selection input. Deployability is judged through turnover reconstruction, value-weighting, factor exposure, capacity restriction, and constrained portfolio checks. The shared benchmark window begins on March 30, 2020, near the high-dispersion COVID recovery period; the confirmatory, regime, and sequential analyses later in the paper address this window sensitivity.

\begin{table}[!htbp]
\centering
\TableSetup
\caption{Confirmatory-window benchmark from 2021-01-04 to 2024-12-30.}
\label{tab:confirmatorywindowbenchmark}
\FitTable{\input{table_confirmatory_window_benchmark.tex}}
\end{table}

Table~\ref{tab:confirmatorywindowbenchmark} repeats the main benchmark after the March 2020 recovery phase. TS-RIDGE remains positive in decile net Sharpe after 20 bps costs, LSTM keeps similar net performance, and TS-OLS retains strong gross performance but remains cost-fragile. The post-2020 window therefore supports the same deployment lesson as the full benchmark: daily ranking signals can be strong before costs, while turnover determines whether that strength survives implementation once the portfolio is traded.

\begin{figure}[!htbp]
    \centering
    \includegraphics[width=0.90\textwidth]{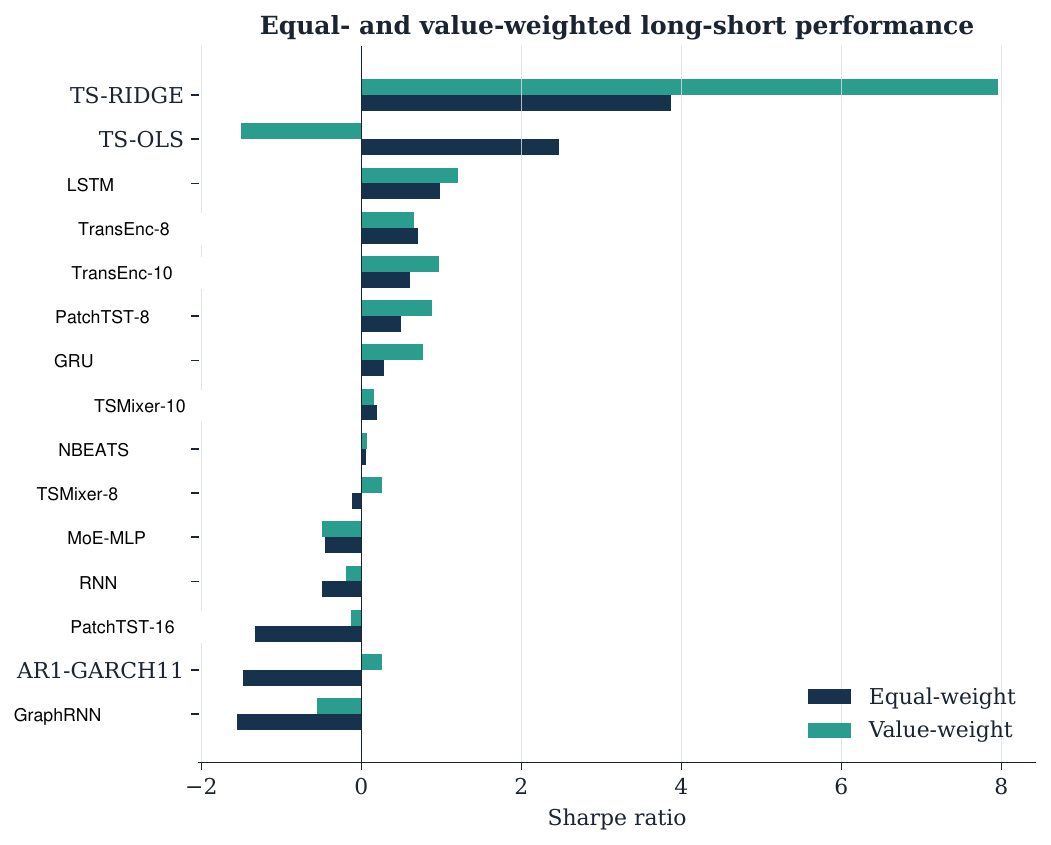}
    \caption{Equal-weighted and value-weighted Sharpe ratios for the fifteen retained models.}
    \label{fig:ewvw}
\end{figure}

Value-weighting strengthens TS-RIDGE and leaves several nonlinear architectures positive at lower magnitudes. TS-OLS turns negative under market-capitalization weights, separating shrinkage value from a plain linear score.

Turnover explains much of the gap between gross and net performance. High-Sharpe strategies with rapid entry, exit, or cross-side migration lose value once trading costs are charged. TS-RIDGE retains positive decile net performance at 20 bps despite substantial turnover, whereas LSTM, TransEnc-8, and TransEnc-10 survive through slower portfolio rotation; when the sort is widened from deciles to quintiles, concentration falls but the qualitative ranking among the leading models is preserved.

\begin{table}[!htbp]
\centering
\TableSetup
\caption{Model Confidence Set membership by return definition.}
\label{tab:mcs}
\FitTable{\input{table_mcs_model_confidence_set.tex}}
\end{table}

Table~\ref{tab:mcs} gives the Model Confidence Set at 90\% and 95\% confidence \citep{HansenLundeNason2011}. The gross MCS contains seven models, so most positive-Sharpe architectures cannot be statistically separated from the best performer. After 20 bps costs in the decile layer, the MCS narrows to TransEnc-8, LSTM, and TransEnc-10 at 90\% confidence, with PatchTST-8 and TSMixer-8 entering only at 95\%. TS-RIDGE exits the net MCS because high daily turnover erodes cost-adjusted performance, which leaves model choice to the preference analysis in Section~\ref{sec:smaa} rather than to a single inference table.

\section{Robust Multi-Criteria Model Selection}
\label{sec:smaa}
The preference-robust layer estimates the criteria distribution, rank-acceptability surface, and preference-space boundaries for the fifteen retained architectures. It implements Algorithm 1 with 10,000 moving-block resamples and 10,000 Dirichlet preference draws over the seven-criterion simplex.

The key reading convention is that SMAA rank-1 acceptability is not a return ranking. Each draw aggregates seven oriented criteria, so a model with negative gross Sharpe can still rank first in the subset of preference states that put enough weight on turnover, drawdown, or other implementation-friction criteria. PatchTST-16 is the useful warning case: despite a gross Sharpe of \(-1.33\) in the master benchmark, it receives a rank-1 acceptability share of 0.172 because its moderate realized turnover and lighter friction-related scores can dominate return when preferences move toward implementation. This behavior is exactly what SMAA is designed to reveal; it is not a claim that PatchTST-16 is the best return generator.

Two slices should also be kept separate. Figure~\ref{fig:smaaweightspace} scans a single low-turnover weight coordinate and finds a rank-1 leader switch near 0.47. Table~\ref{tab:smaastructuredpriors}, by contrast, uses structured Dirichlet priors over the full criterion vector; the net-performance prior still rewards decile net 20 bps Sharpe, where TS-RIDGE is strong, and therefore keeps TS-RIDGE as leader. The two outputs answer different preference questions and are not in conflict.

\begin{table}[!htbp]
\centering
\TableSetup
\caption{SMAA rank acceptability summary. Rank-1 and modal-rank quantities aggregate all seven oriented criteria and are not return-only rankings.}
\label{tab:smaaaccept}
\FitTable{\input{table_smaa_acceptability_top10.tex}}
\end{table}

The leading acceptability values in Table~\ref{tab:smaaaccept} show that TransEnc-8 and LSTM have the largest top-3 probabilities, both near 0.692, with TransEnc-10 next at 0.554. TS-RIDGE has a rank-1 probability of 0.199 and a top-3 probability of 0.293. PatchTST-16 has negative gross performance in the master benchmark, but it combines moderate realized turnover of 0.19 with lighter implementation-friction scores than several high-turnover losing architectures; when the preference draw puts enough mass on the low-turnover and drawdown-related coordinates, those oriented criteria can outweigh its return deficit and give it a 0.172 rank-1 share. This is a criterion-orientation consequence of the SMAA design: the table is an acceptability surface, and no model has rank-1 probability above 0.36.

\begin{table}[!htbp]
\centering
\TableSetup
\caption{SMAA rank-1 acceptability bands and modal ranks. The modal rank is the most frequently assigned SMAA rank under sampled criteria and preferences, and can be driven by non-return criteria such as turnover or drawdown.}
\label{tab:smaabands}
\FitTable{\input{table_smaa_rank1_bands.tex}}
\end{table}

Table~\ref{tab:smaabands} reports Monte Carlo bands around rank-1 acceptability and the modal rank for each leading architecture. TransEnc-8 has the largest rank-1 region, LSTM's modal rank is second, and TransEnc-10 concentrates in third, while TS-RIDGE has a narrower decision region than its gross Sharpe would suggest because turnover and drawdown receive positive weight in many preference states.

The finite-sample precision check is reported in the appendix. It uses 50,000 preference draws only as a convergence diagnostic, while the main SMAA design uses 10,000 moving-block criteria resamples and 10,000 Dirichlet preference draws.

The resampling layer quantifies sampling uncertainty in the realized criteria and preference uncertainty over the seven-criterion simplex. It does not quantify neural-network training randomness because the retained forecasts are produced by a fixed walk-forward training protocol; multi-seed refits are therefore a compute-dependent robustness extension rather than part of the present SMAA uncertainty measure.

\begin{figure}[!htbp]
    \centering
    \includegraphics[width=0.92\textwidth]{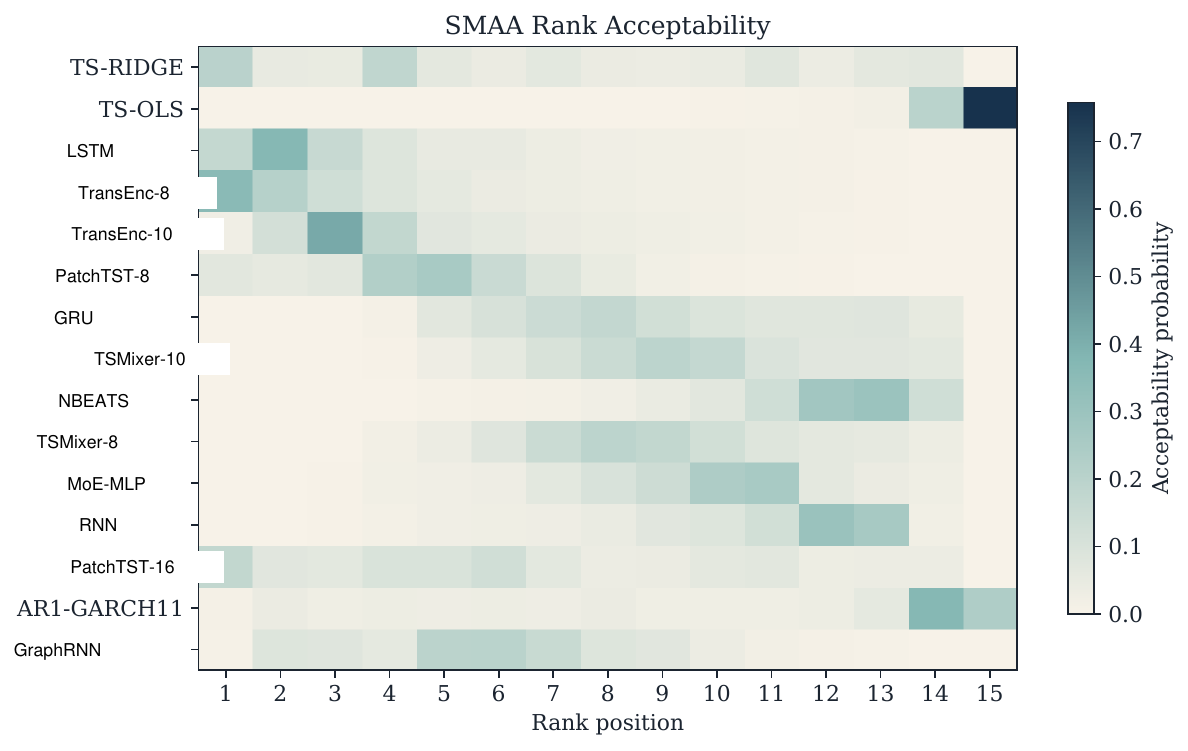}
    \caption{SMAA rank acceptability over 15 models and 15 possible ranks.}
    \label{fig:smaaheatmap}
\end{figure}

The full rank distribution in Figure~\ref{fig:smaaheatmap} reinforces the absence of a scalar winner. TransEnc-8 has the highest rank-1 share, LSTM spreads across ranks 1 through 3, TransEnc-10 concentrates in the third rank, and TS-RIDGE receives substantial rank-1 mass but less top-3 stability because turnover and cost criteria penalize it when their weights rise.

\begin{figure}[!htbp]
    \centering
    \includegraphics[width=0.82\textwidth]{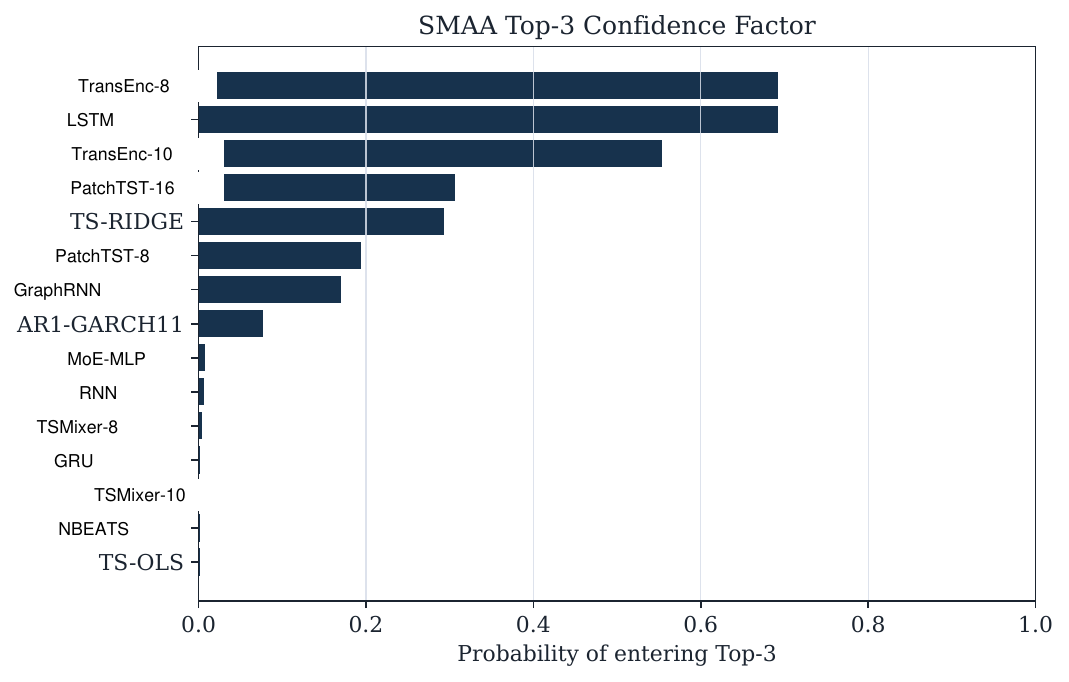}
    \caption{SMAA top-3 acceptability probabilities.}
    \label{fig:smaatop3}
\end{figure}

Top-3 acceptability concentrates around the lower-turnover transformer-encoder and recurrent configurations (Figure~\ref{fig:smaatop3}). TS-RIDGE remains attractive for return-oriented preferences and loses acceptability when trading-cost and drawdown criteria receive larger weights.

\begin{figure}[!htbp]
    \centering
    \includegraphics[width=0.70\textwidth]{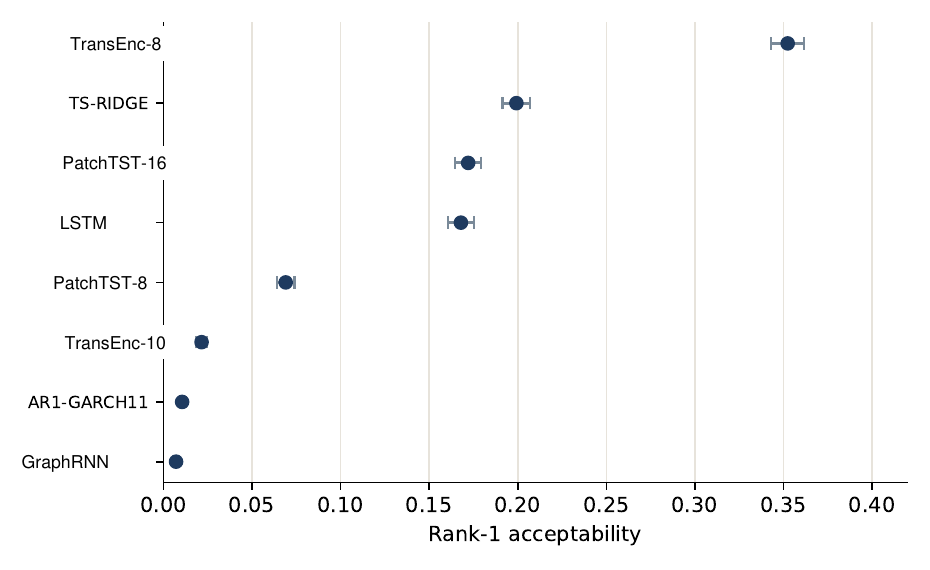}
    \caption{Rank-1 SMAA acceptability bands for the leading models.}
    \label{fig:smaabands}
\end{figure}

\begin{figure}[!htbp]
    \centering
    \includegraphics[width=0.86\textwidth]{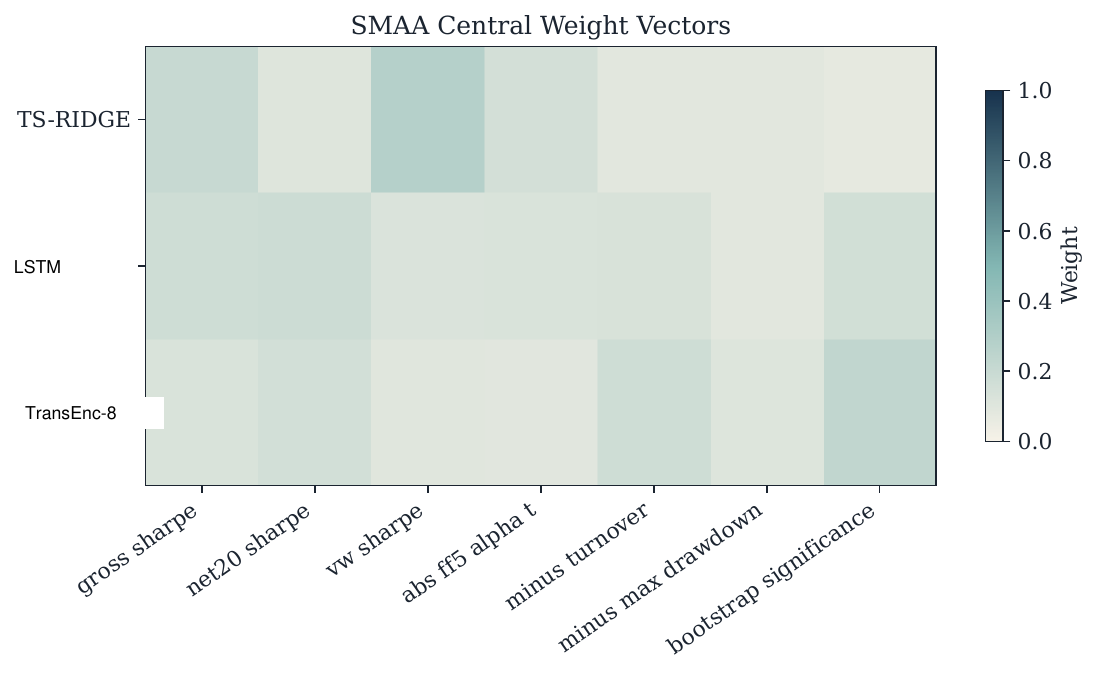}
    \caption{Central SMAA weight vectors for selected leading models.}
    \label{fig:smaacentral}
\end{figure}

Figure~\ref{fig:smaacentral} maps acceptability to investor profiles. A return-focused profile places high weight on gross and net Sharpe and selects TS-RIDGE. A cost-sensitive profile places high weight on turnover and net Sharpe and selects TransEnc-8 or LSTM. A capacity-focused profile places high weight on value-weighted Sharpe and drawdown and keeps TS-RIDGE in the candidate set. A statistical-rigor profile places high weight on bootstrap strength and factor alpha and favors TransEnc-8, LSTM, or TS-RIDGE depending on the alpha-versus-cost tradeoff.

\begin{figure}[!htbp]
    \centering
    \includegraphics[width=0.94\textwidth]{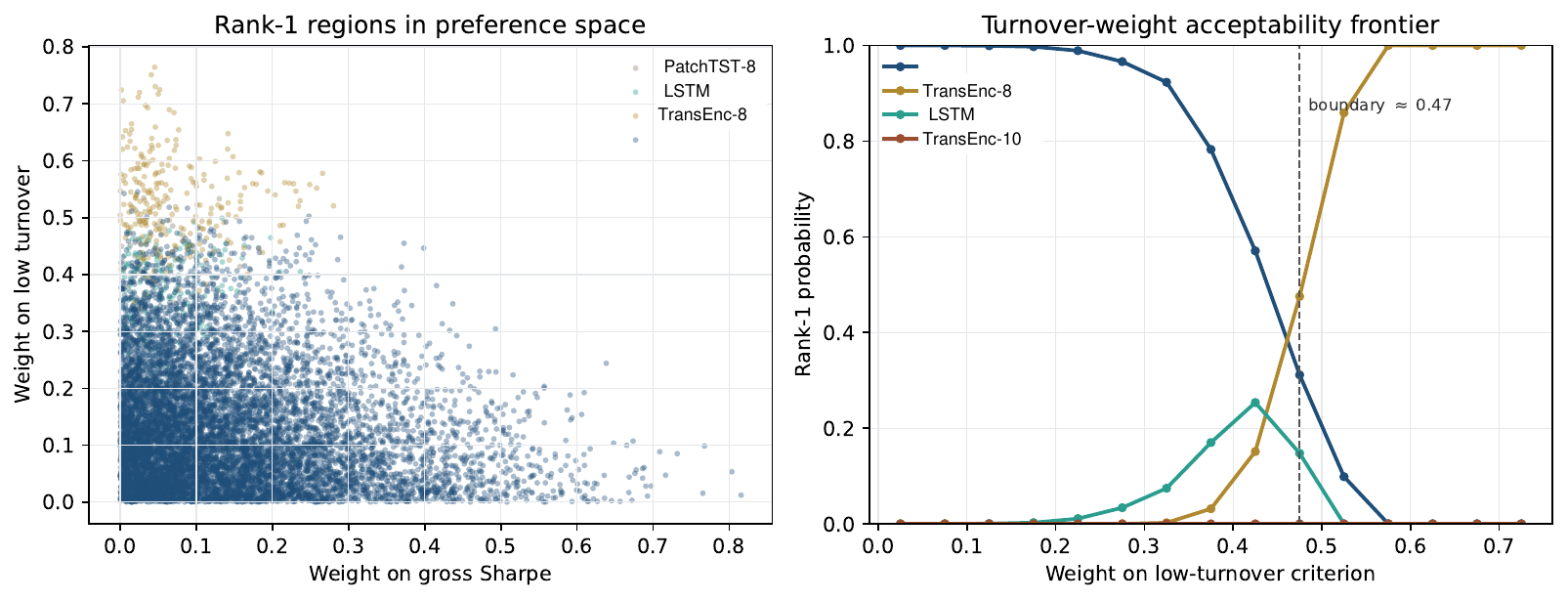}
    \caption{SMAA preference-space geometry and turnover-weight acceptability frontier.}
    \label{fig:smaaweightspace}
\end{figure}

Figure~\ref{fig:smaaweightspace} projects the sampled preference weights onto gross-Sharpe and low-turnover coordinates. TS-RIDGE occupies the region with high return weight and limited cost weight. TransEnc-8 gains rank-1 mass as the low-turnover criterion becomes more prominent. In this two-dimensional turnover-return projection, the empirical crossover occurs when the low-turnover weight reaches about 0.47 across the 10,000 Dirichlet weight draws. Other criteria in the full seven-dimensional simplex still move local boundaries, so the figure is a decision-threshold projection, not a global separating rule.

\begin{table}[!htbp]
\centering
\TableSetup
\caption{SMAA acceptability under structured preference priors. The net-performance prior is not a pure turnover-aversion prior: it still rewards decile net 20 bps Sharpe, which keeps TS-RIDGE as the rank-1 leader while increasing the secondary acceptability of lower-turnover sequence models.}
\label{tab:smaastructuredpriors}
\FitTable{\input{table_smaa_structured_priors.tex}}
\vspace{2pt}
\begin{minipage}{0.96\textwidth}
\footnotesize\emph{Note.} Selected leading models only; probabilities are computed over the full 15-model universe, so displayed rows need not sum to one.
\end{minipage}
\end{table}

Table~\ref{tab:smaastructuredpriors} replaces the uniform Dirichlet prior with return-oriented, net-performance-oriented, and statistical-rigor priors. TS-RIDGE remains the prior leader under all three structured weights. The net-performance prior is not a pure turnover-aversion prior; it still rewards decile net 20 bps Sharpe, where TS-RIDGE is strong, and therefore keeps the high-turnover shrinkage model as leader while raising TransEnc-8 and LSTM as secondary low-turnover alternatives. Under a uniform preference prior the index sharpens the lead of TransEnc-8; its decision content is clearest under return-oriented preferences, where the SMAA-preferred TS-RIDGE is overturned once the same scores pass through the constrained portfolio.

\begin{table}[!htbp]
\centering
\TableSetup
\caption{Annualized Sharpe ratios after one-way turnover costs across transaction cost levels. Costs are applied to the common-window daily return series using the same drift-adjusted one-way turnover reconstruction as Table~\ref{tab:masterbenchmark}.}
\label{tab:costsensitivity}
\FitTable{\input{table_cost_sensitivity_sharpe.tex}}
\end{table}

\begin{figure}[!htbp]
    \centering
    \includegraphics[width=0.72\textwidth]{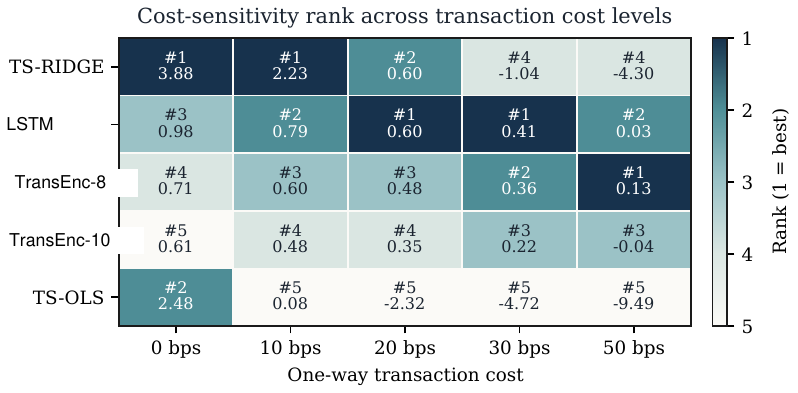}
    \caption{Ranking sensitivity across transaction cost levels.}
    \label{fig:costsensitivity}
\end{figure}

Table~\ref{tab:costsensitivity} and Figure~\ref{fig:costsensitivity} show the mechanism behind the SMAA shift. The Sharpe ratios are annualized after deducting one-way turnover costs at each stated basis-point level from the common-window return series. TS-RIDGE leads at zero and 10 bps, TS-RIDGE and LSTM are essentially tied at the rounded 20 bps decile net-Sharpe value, LSTM leads at 30 bps, and TransEnc-8 leads at 50 bps; as the cost schedule tightens, cost-sensitive preferences move acceptability toward architectures whose rankings rotate more slowly. Figure~\ref{fig:costsensitivity} uses the unrounded ordering when ranks differ inside a rounded tie.

\begin{table}[!htbp]
\centering
\TableSetup
\caption{Regime-conditional SMAA rank-1 acceptability.}
\label{tab:smaaregime}
\FitTable{\input{table_smaa_regime_rank1.tex}}
\end{table}

\begin{figure}[!htbp]
    \centering
    \includegraphics[width=0.94\textwidth]{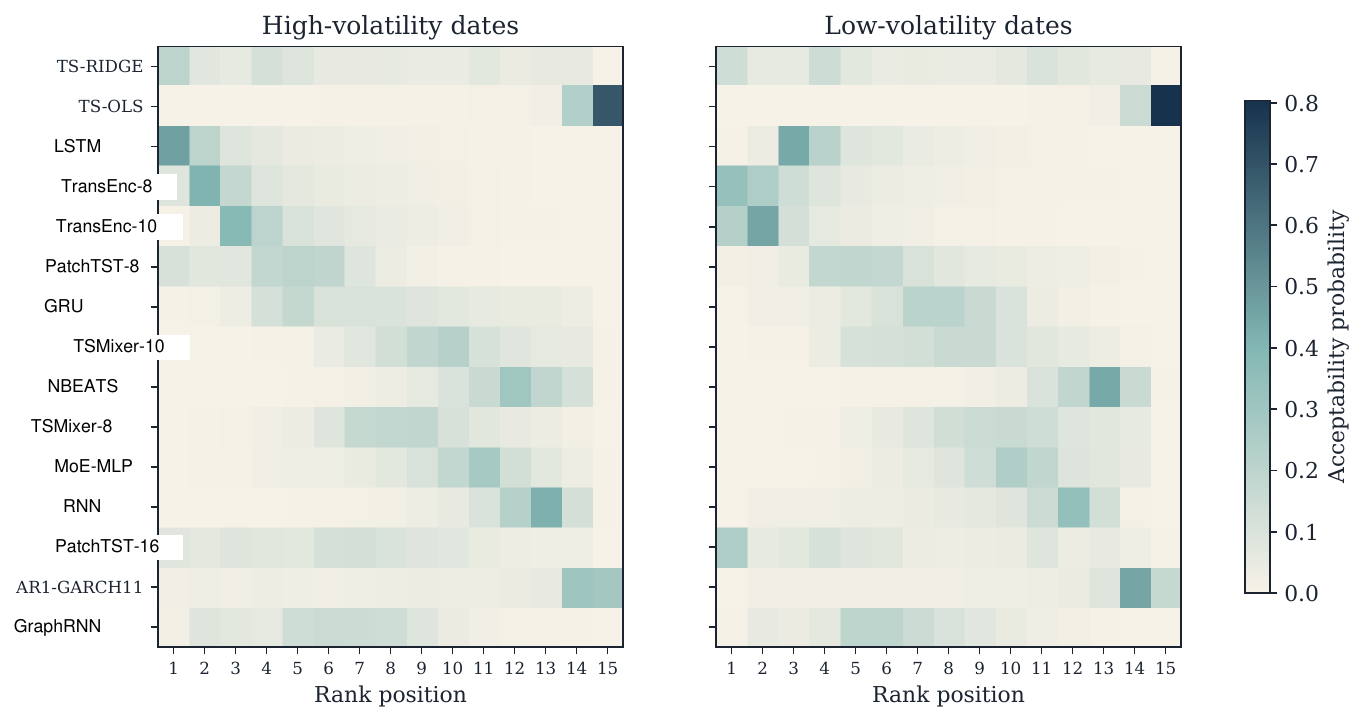}
    \caption{SMAA rank acceptability by market-volatility state.}
    \label{fig:smaaregime}
\end{figure}

Table~\ref{tab:smaaregime} and Figure~\ref{fig:smaaregime} split the acceptability analysis by market-volatility state. LSTM has rank-1 probability of 0.470 in high-volatility dates and 0.009 in low-volatility dates, while TransEnc-8 moves in the opposite direction, rising from 0.089 to 0.337. TransEnc-10 also gains in low-volatility dates, with rank-1 probability of 0.230 and top-3 probability of 0.811, and TS-RIDGE keeps positive rank-1 mass in both states, at 0.198 and 0.144. The leading architectures persist across states, but the preferred member shifts with volatility.

\begin{table}[!htbp]
\centering
\TableSetup
\caption{Sequential split-sample SMAA validation.}
\label{tab:smaaoos}
\FitTable{\input{table_smaa_oos_validation.tex}}
\end{table}

Table~\ref{tab:smaaoos} gives the sequential split-sample check. The first half of the evaluation period has Kendall tau of 0.62 and Spearman correlation of 0.80 with the second-half SMAA ranking; TransEnc-10, TransEnc-8, and LSTM remain near the top across the split, while TS-RIDGE retains a middle-rank region with stable top-3 probability. The acceptability ranking has measurable decision content, but it is not static, so updating the decision analysis with new market data is part of the model-selection rule.

\section{Constrained Portfolio Construction}
\label{sec:portfolio}
The common-window decile portfolio treats the model score as a sorting rule. The deployment-adjusted layer implements Problem~\ref{prob:daily-qp} and Algorithm~2. The same score enters a constrained daily portfolio map, and realized next-day returns determine deployment loss. Proposition~\ref{thm:regret-lipschitz} gives the ex-ante sensitivity channel behind the regret tables, including the additional downstream-state term that appears when previous holdings differ across model paths, while Proposition~\ref{thm:robust-shrinkage} interprets score ambiguity as an allocation-level shrinkage penalty.

\begin{table}[!htbp]
\centering
\TableSetup
\caption{Constrained portfolio optimization under wider implementation designs.}
\label{tab:qpsummary}
\FitTable{\input{table_qp_spec_spread.tex}}
\end{table}

Table~\ref{tab:qpsummary} gives the baseline design, a stronger turnover-penalty design, and a 50 bps cost stress, with annual returns reported as decimal annualized returns. The baseline and higher-turnover-penalty designs use the one-way 20 bps cost formula in Problem~\ref{prob:daily-qp}; the stress column replaces 0.002 by 0.005. The low-risk variant is merged into the baseline because the risk penalty does not bind over this parameter range. All optimized portfolios have negative constrained-QP net Sharpe after 20 bps costs: TransEnc-8 is the least negative baseline design, with net Sharpe of $-0.76$, followed by TransEnc-10 at $-1.18$ and LSTM at $-1.25$, while TS-RIDGE falls to $-2.37$ because its high-turnover signal remains costly inside the constrained daily portfolio.

\begin{table}[!htbp]
\centering
\TableSetup
\caption{Robust valuation frontier for constrained allocations under ellipsoidal score ambiguity.}
\label{tab:drofrontier}
\FitTable{\input{table_v8_dro_frontier.tex}}
\end{table}

\begin{figure}[!htbp]
    \centering
    \includegraphics[width=0.70\textwidth]{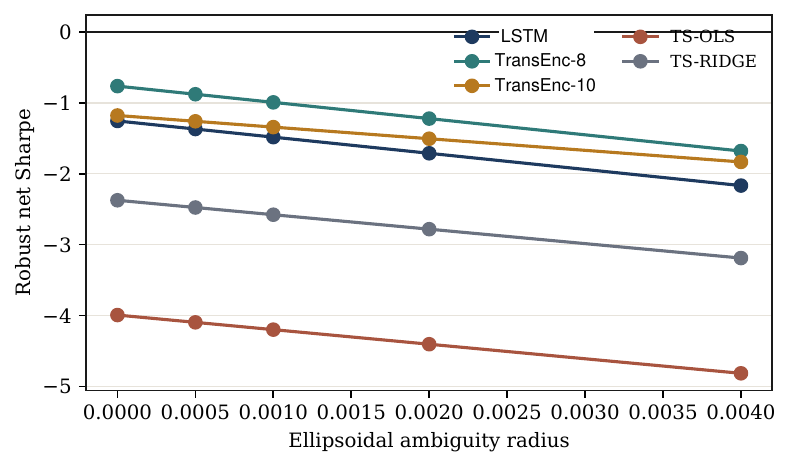}
    \caption{Robust net Sharpe under ellipsoidal score ambiguity.}
    \label{fig:drofrontier}
\end{figure}

Table~\ref{tab:drofrontier} and Figure~\ref{fig:drofrontier} value the constrained allocations under the support-function penalty in Proposition~\ref{thm:robust-shrinkage}. The perturbation radius lowers robust net Sharpe monotonically for every model. TransEnc-8 remains the least negative allocation across the radius grid. The larger penalty for high-norm allocations shows the robust-optimization channel: score ambiguity charges concentration even when leverage is fixed. Table~\ref{tab:drofrontier} reports the worst-case robust valuation of Proposition~\ref{thm:robust-shrinkage}, in which the radius is charged against the fixed nominal weights. Re-solving the robust program day by day instead---letting the optimizer de-concentrate to absorb the penalty---leaves realized net Sharpe essentially unchanged over the same radius grid: TransEnc-8 stays at \(-0.76\) and remains the least-negative allocation at every \(\rho\), versus \(-1.68\) under the ex-post valuation at \(\rho=0.004\). The full re-optimized frontier is reported in the appendix. The day-by-day robust optimization therefore confirms that the deployment-boundary ranking is not an artifact of ex-post valuation.

\begin{table}[!htbp]
\centering
\TableSetup
\caption{Reference benchmarks for the constrained-portfolio layer. The final column is the gap versus the best reference row in this table, not the model-set deployment regret in Definition~3.}
\label{tab:naivereference}
\FitTable{\input{table_v8_naive_qp_reference.tex}}
\end{table}

Table~\ref{tab:naivereference} adds simple reference lines to the constrained-portfolio results. The market factor and no-trade cash are not architecture-selection rules; they anchor the scale of deployment loss. The final column is a gap versus the best row in this reference table, which is the market factor, and is therefore separate from the model-set deployment regret in Table~\ref{tab:decisionregret}. Since the forecast-driven constrained portfolios all have negative constrained-QP net Sharpe at 20 bps, the optimized layer ranks implementation losses. It does not by itself establish a profitable trading rule.

\begin{figure}[!htbp]
    \centering
    \includegraphics[width=0.86\textwidth]{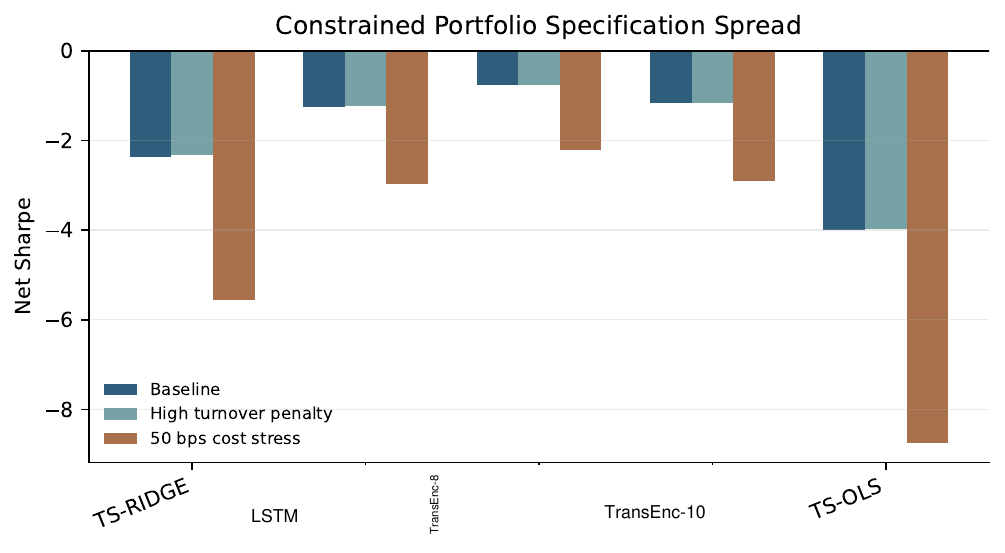}
    \caption{Net Sharpe ratios under wider constrained-portfolio designs.}
    \label{fig:qpspecspread}
\end{figure}

Figure~\ref{fig:qpspecspread} summarizes the optimization-level cost pattern. The baseline and stronger turnover-penalty designs preserve the ordering, whereas the 50 bps stress creates visible dispersion and lowers high-turnover strategies. The constrained portfolio layer is not a decile capacity frontier: it uses continuous dollar weights, leverage, dollar-volume caps, beta and industry constraints, and an explicit turnover-control objective, yet strict next-day accounting still leaves the daily optimized portfolios below zero after costs.

SMAA penalizes TS-RIDGE when turnover receives criterion weight, and the constrained portfolio confirms that penalty under next-day realized returns. Across the transaction-cost grid, the optimized ranking is stable: TransEnc-8, TransEnc-10, LSTM, TS-RIDGE, and TS-OLS. The association reported in Figure~\ref{fig:predictoptgap} is computed between the raw SMAA expected-rank ordering restricted to the promoted five and the constrained-QP ranking (TransEnc-8, TransEnc-10, LSTM, TS-RIDGE, TS-OLS). This pairing gives Kendall's $\tau=0.80$ and Spearman's $\rho=0.90$ at every cost level from 0 to 50 bps; the raw top-3 ordering gives the same rounded association, while the raw rank-1 restriction does not.

\begin{figure}[!htbp]
    \centering
    \includegraphics[width=0.66\textwidth]{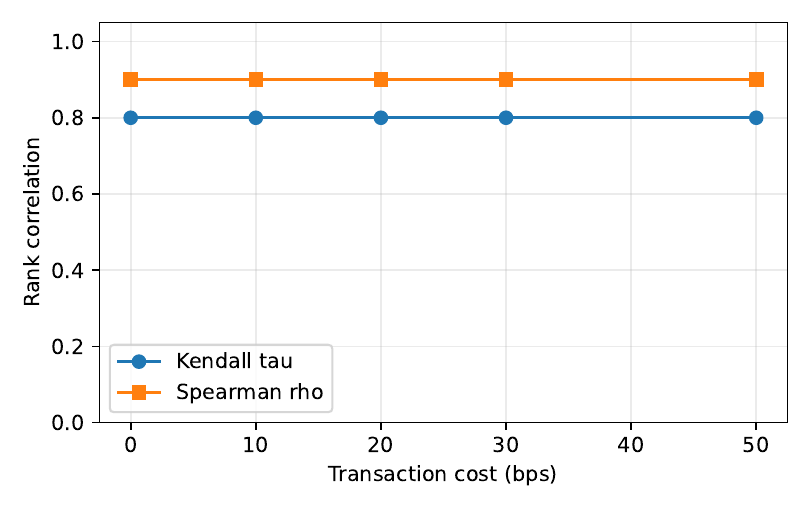}
    \caption{Predict-then-optimize rank association between raw SMAA expected ranks and constrained-QP ranks across transaction cost levels.}
    \label{fig:predictoptgap}
\end{figure}

The rank-association curve in Figure~\ref{fig:predictoptgap} is the empirical counterpart of Proposition~\ref{thm:turnover-alignment}. Criteria-based turnover weights and portfolio turnover control point to the same low-turnover leader once the decision layer is imposed. Across signal universes, the role pattern remains conditional: shrinkage wins in the full structured decile universe, and constrained daily deployment favors TransEnc-8.

\begin{table}[!htbp]
\centering
\TableSetup
\caption{Deployment regret under constrained-portfolio designs.}
\label{tab:decisionregret}
\FitTable{\input{table_decision_regret_by_qp_design.tex}}
\end{table}

\begin{figure}[!htbp]
    \centering
    \includegraphics[width=0.78\textwidth]{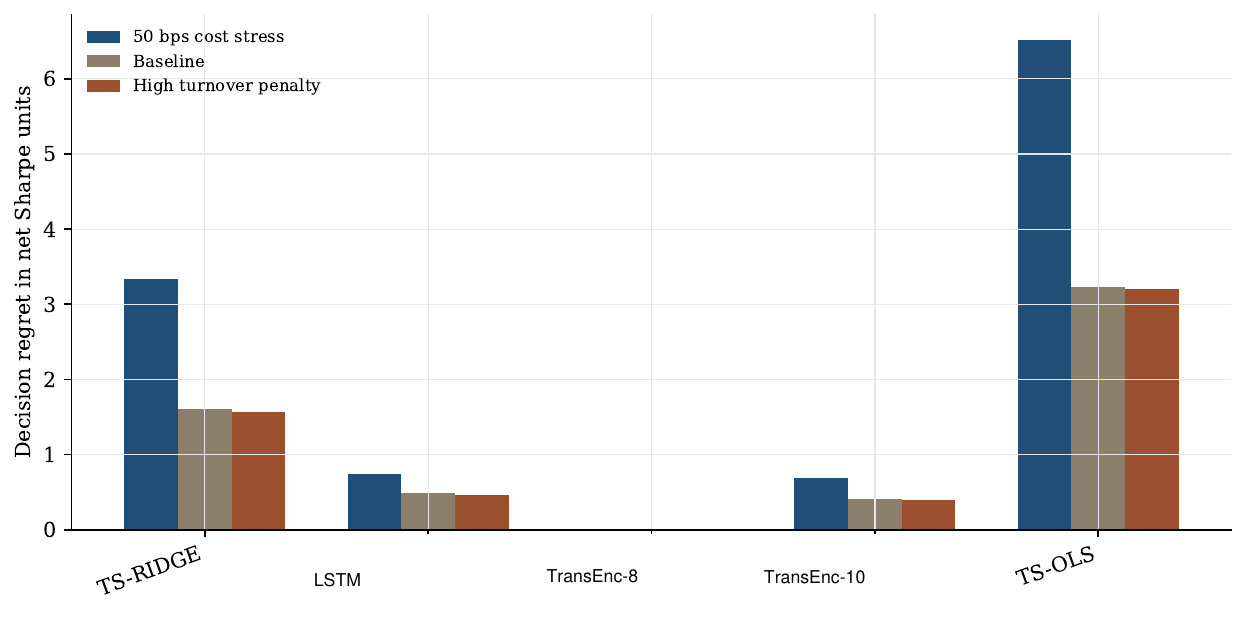}
    \caption{Deployment regret in net Sharpe units under constrained-portfolio designs.}
    \label{fig:decisionregret}
\end{figure}

Table~\ref{tab:decisionregret} converts the optimized ranking into decision cost. Relative to TransEnc-8, the baseline regret is 0.41 Sharpe units for TransEnc-10, 0.49 for LSTM, 1.61 for TS-RIDGE, and 3.23 for TS-OLS. The raw SMAA rank-1 rule and the QP decision rule both select TransEnc-8, so the cross-framework regret is zero under the baseline design. Proposition~\ref{thm:regret-lipschitz} explains why models with similar score vectors and similar downstream states have bounded decision-value gaps, while Figure~\ref{fig:decisionregret} shows that the ordering persists under the high-turnover penalty and the 50 bps stress.

\begin{table}[!htbp]
\centering
\TableSetup
\caption{Raw and optimized SMAA rank acceptability for promoted models. The raw columns inherit the 15-model decile SMAA and are restricted here to the promoted subset; the optimized columns rerun SMAA on the five-model constrained-portfolio universe.}
\label{tab:optimizedsmaa}
\FitTable{\input{table_optimized_smaa_gap.tex}}
\end{table}

\begin{figure}[!htbp]
    \centering
    \includegraphics[width=0.86\textwidth]{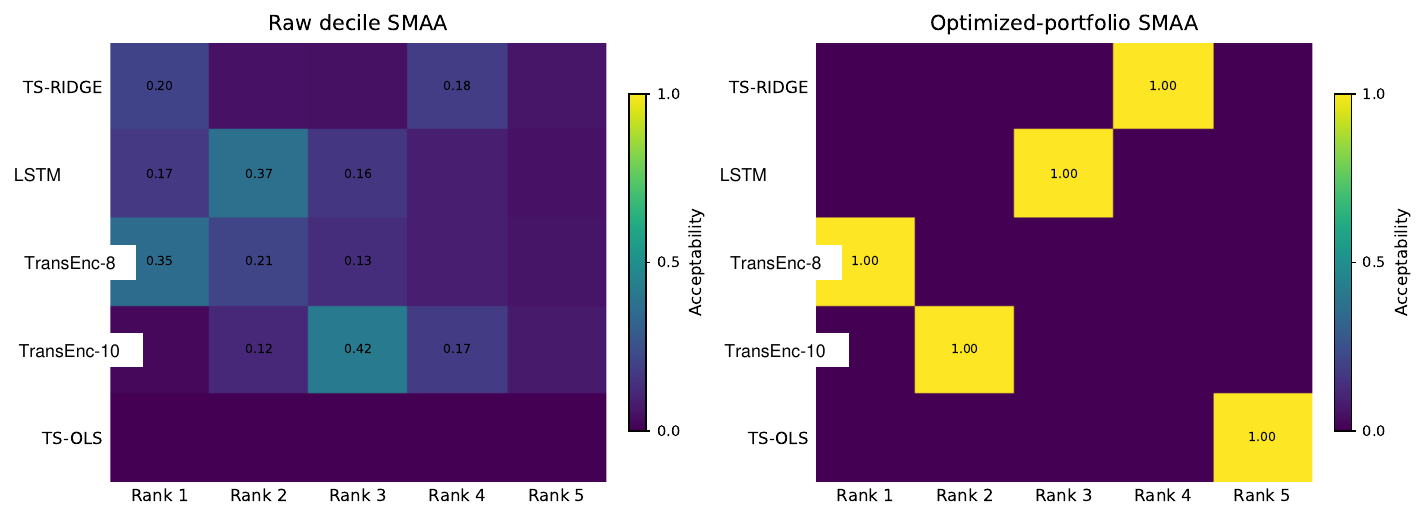}
    \caption{Raw 15-model decile SMAA, restricted to promoted models, and optimized five-model portfolio SMAA.}
    \label{fig:optimizedsmaa}
\end{figure}

Table~\ref{tab:optimizedsmaa} applies SMAA to optimized portfolio outcomes. Its raw columns keep the original 15-model decile-SMAA universe and display only the promoted models, whereas the optimized columns recompute acceptability in the five-model constrained-portfolio universe. TransEnc-8 rank-1 acceptability rises from 0.352 in the raw decile evaluation to 1.000 after optimization. TS-RIDGE falls from 0.199 to zero. The constrained portfolio layer removes most preference sensitivity in favor of the low-turnover attention model, but the optimized comparison is a relative loss ranking because all net Sharpe ratios are negative.

\begin{table}[!htbp]
\centering
\TableSetup
\caption{Deployment-adjusted acceptability after weighting SMAA mass by constrained-portfolio regret.}
\label{tab:decisionawareaccept}
\FitTable{\input{table_deployment_adjusted_acceptability.tex}}
\end{table}

\begin{figure}[!htbp]
    \centering
    \includegraphics[width=0.72\textwidth]{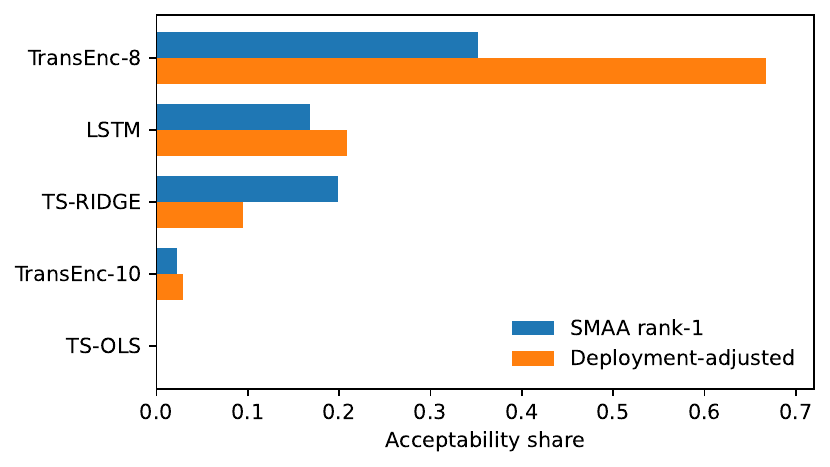}
    \caption{Raw and deployment-adjusted rank-1 acceptability for promoted models.}
    \label{fig:decisionawareaccept}
\end{figure}

The deployment-adjusted index in Table~\ref{tab:decisionawareaccept} discounts each model's SMAA rank-1 mass by downstream QP regret using Definition~4 and Proposition~\ref{prop:da-entropic}. In the baseline constrained-QP calculation, the implemented discount is \(\exp\{-\mathcal R_m^d/\tau_d\}\) with \(\tau_d\approx1.170\), equivalently \(\gamma_d=1/\tau_d\). TransEnc-8's deployment-adjusted rank-1 share is 0.667, LSTM follows at 0.209, and TS-RIDGE falls to 0.095 because its criteria-level rank-1 region carries a larger constrained-portfolio regret. A sensitivity check using \(c\in\{0.25,0.5,1,2,4\}\) in \(\exp\{-c\mathcal R_m^d/\tau_d\}\) preserves the ordering TransEnc-8 \(>\) LSTM \(>\) TS-RIDGE. This entropically regularized acceptability measure is the paper's link between preference uncertainty and realized deployment loss.

\section{Signal Mechanisms and Feature Boundaries}
\label{sec:mechanisms}
The input space determines the value of each architecture. Table~\ref{tab:d13main} reports the five promoted models across F1, F2, and F3 on gross long--short returns over the confirmatory window. The F3 rows use the same full-universe forecast files as the confirmatory headline benchmark, so the shared F3 cells match Table~\ref{tab:confirmatorywindowbenchmark} exactly for TS-RIDGE, TS-OLS, LSTM, TransEnc-8, and TransEnc-10. LSTM is strongest when the input is dominated by price-path information, whereas TS-RIDGE reasserts itself when the full structured signal stack becomes available; the intermediate universe is where complementarity becomes most informative, because latent-state nonlinear structure helps with recent price dynamics while shrinkage becomes valuable once the predictor set expands.

\begin{table}[!htbp]
\centering
\TableSetup
\caption{Promoted-model gross long--short results across the three nested signal universes, 2021-01-04 to 2024-12-30. F3 uses the full-universe forecast files from the headline benchmark; returns are long-high minus short-low gross returns, and RankIC is the daily cross-sectional Spearman correlation between the same raw score and next-day returns.}
\label{tab:d13main}
\FitTable{\input{table_direction13_main.tex}}
\end{table}

RankIC in Table~\ref{tab:d13main} is the time-series average of daily cross-sectional Spearman correlations between raw model scores and next-day returns. The long--short Sharpe ratio uses the same score direction but only the extreme deciles, and an alignment check confirms that both quantities use next-day returns rather than same-date returns. The important case is TS-RIDGE in F3: its average RankIC is essentially zero, but its gross Sharpe is 4.18 because the predictive content concentrates in the extreme deciles, so a near-zero full-rank correlation can coexist with a strong top-minus-bottom spread. The same logic motivates the decile-return matrices and sign-direction checks for rows whose RankIC and gross Sharpe have opposite signs.

\begin{figure}[!htbp]
    \centering
    \includegraphics[width=0.86\textwidth]{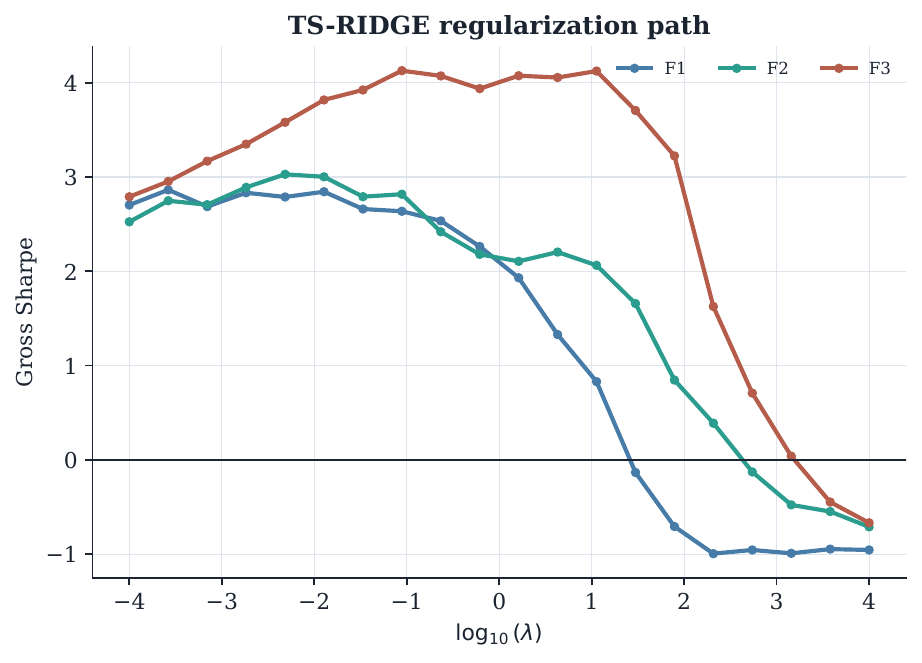}
    \caption{TS-RIDGE regularization path across nested feature universes.}
    \label{fig:ridgelambda}
\end{figure}

The ridge path makes the shrinkage mechanism explicit: as the feature universe expands, Gross-Sharpe optima move toward stronger regularization, and the best full-signal penalty becomes larger than the best price-path penalty. The full structured signal set contains useful information, but that information becomes valuable only when coefficient discipline prevents overreaction to unstable high-dimensional variation.

\begin{figure}[!htbp]
    \centering
    \includegraphics[width=0.88\textwidth]{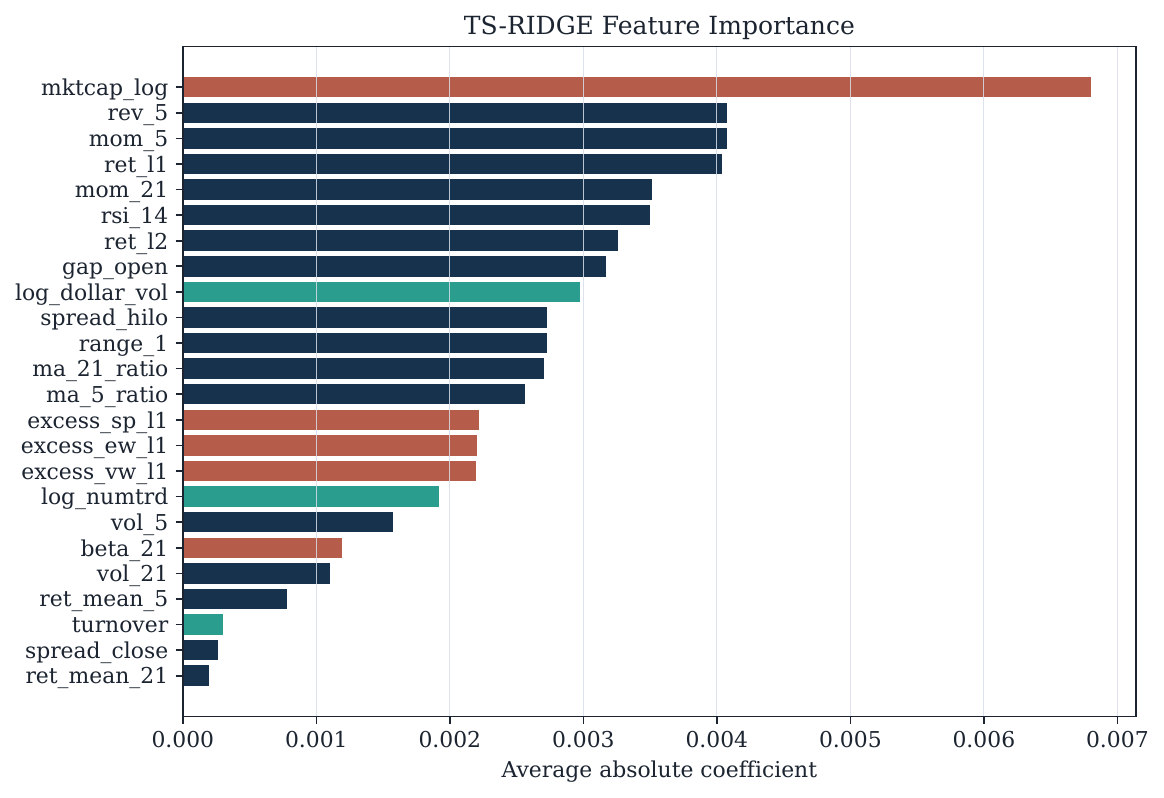}
    \caption{Average absolute TS-RIDGE coefficient by feature.}
    \label{fig:ridgeimportance}
\end{figure}

\begin{table}[!htbp]
\centering
\TableSetup
\caption{Largest TS-RIDGE coefficients in the full signal universe.}
\label{tab:ridgeimportance}
\FitTable{\input{table_ridge_f3_feature_importance_top12.tex}}
\end{table}

Figure~\ref{fig:ridgeimportance} and Table~\ref{tab:ridgeimportance} give average absolute Ridge coefficients across the 1,197 evaluation dates. The largest single coefficient belongs to the market-capitalization proxy, which is available only in the F3 universe, and this feature profile explains the F1-to-F3 performance jump for TS-RIDGE because the full signal stack gives Ridge access to cross-sectional size variation absent from the price-path universe. Short-horizon reversal and momentum features provide the next-largest coefficients; the size/beta block supplies the largest single effect, while the price-path block supplies a broader set of active predictors.

\begin{figure}[!htbp]
    \centering
    \includegraphics[width=0.90\textwidth]{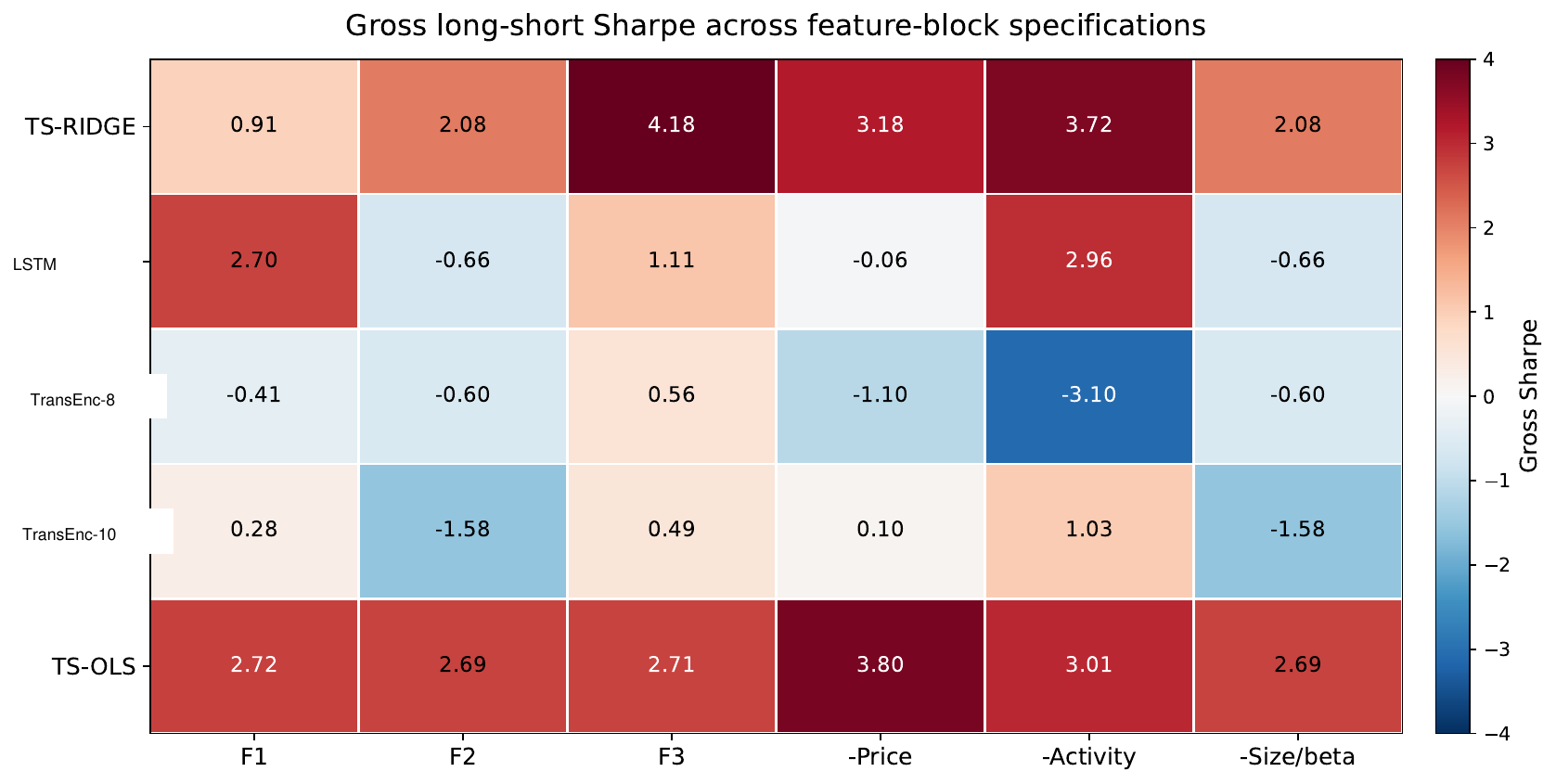}
    \caption{Feature-block ablation heatmap for the five promoted models under the common forecast protocol.}
    \label{fig:ablation}
\end{figure}

The feature-block ablation reinforces the same mechanism. Because F2 is F3 without the size, beta, and market-relative block, removing the price block weakens the nonlinear price-path channel and removing activity variables tests whether trading intensity carries the ranking. Restoring the full structured stack increases the value of shrinkage, and the plain transformer-encoder and recurrent configurations remain sensitive to feature richness and sample length. The shared F3 cells were checked against the confirmatory benchmark and match for the five promoted models.

Complementarity is evaluated after signal geometry is visible. Rank correlations separate close substitutes from differentiated rankings with weak standalone economics, and a useful blend must therefore combine signal distinctiveness with positive economic evidence rather than low correlation alone.

\begin{figure}[!htbp]
    \centering
    \includegraphics[width=0.88\textwidth]{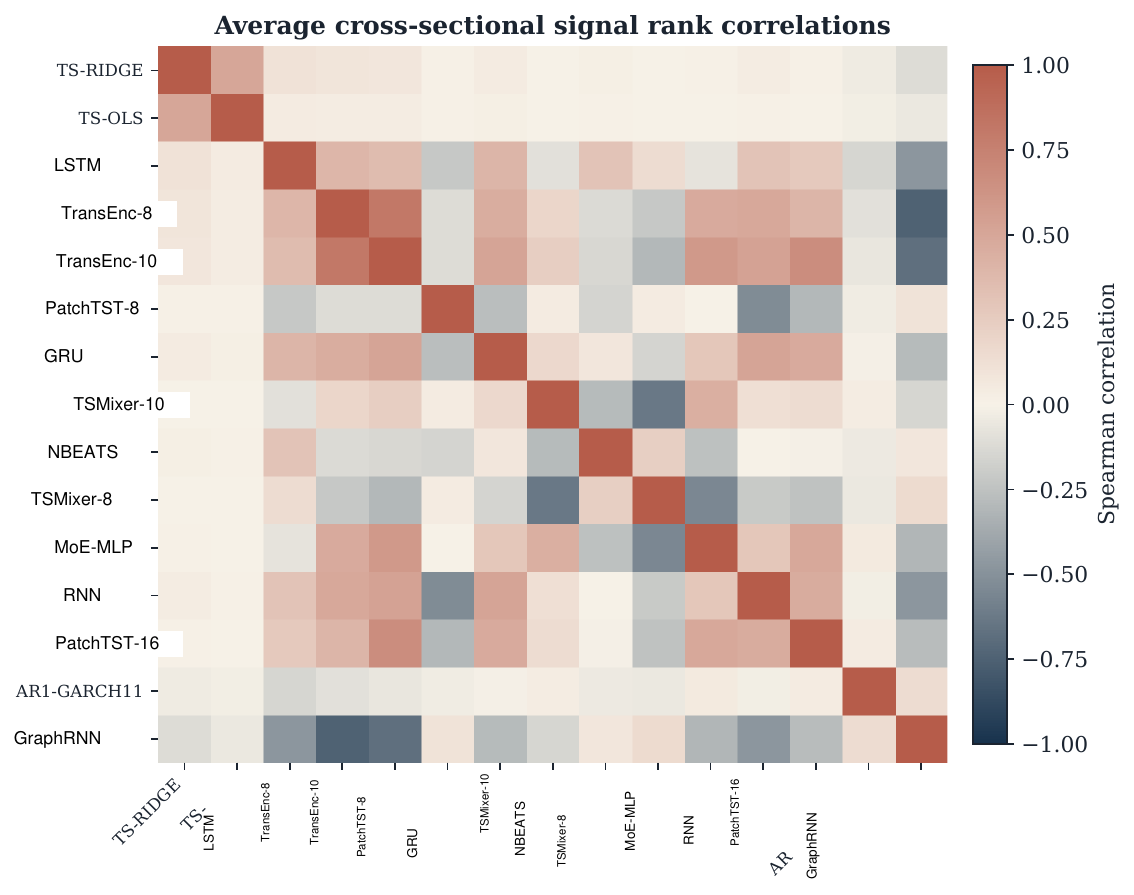}
    \caption{Average daily cross-sectional Spearman rank correlations among retained model prediction signals.}
    \label{fig:signalcorr}
\end{figure}

\begin{figure}[!htbp]
    \centering
    \includegraphics[width=0.86\textwidth]{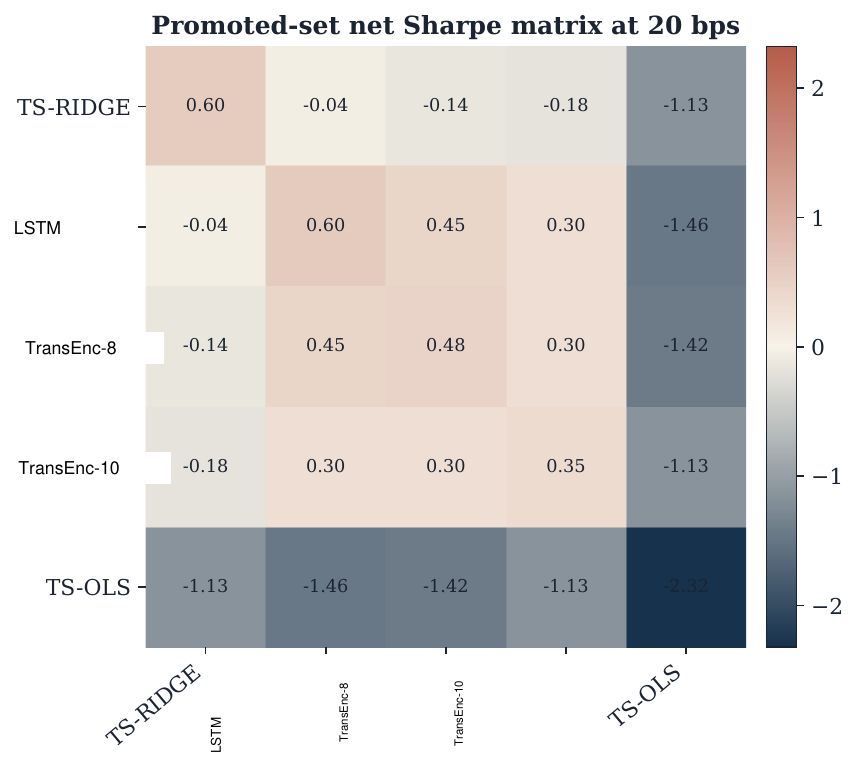}
    \caption{Promoted-set pairwise rolling-combination net Sharpe matrix.}
    \label{fig:combomatrix}
\end{figure}

The TS-RIDGE--LSTM pair is the central interaction. The common-window equal blend over all F3 features has low signal correlation and positive gross Sharpe--decile net Sharpe at 20 bps is close to zero--while the confirmatory blend moves to the intermediate F2 universe, where price-path and activity information leave more room for Ridge and LSTM to contribute orthogonal rankings. In the diagnostic tables, the internal object codes have been replaced by descriptive labels: F1 LSTM long--short, F2 TS-RIDGE long--short, F2 LSTM long--short, F2 Ridge--LSTM combo, and F3 TS-RIDGE long--short. TS-RIDGE supplies the stable full-signal linear anchor, and LSTM supplies the price-sensitive nonlinear contrast.

\begin{table}[!htbp]
\centering
\TableSetup
\caption{Risk and exposure diagnostics for promoted models and central portfolios.}
\label{tab:riskdiagnostics}
\input{table_risk_diagnostics_dual_panel.tex}
\end{table}

The market-proxy and holdings-exposure diagnostics reduce the chance that the main portfolios are disguised beta, size, liquidity, price, or industry exposures. The market-proxy rows use the same common-window return series as the five-factor regressions in Table~\ref{tab:masterbenchmark}. TS-RIDGE and TS-OLS have large promoted-model alphas, LSTM is statistically positive, and the F2 TS-RIDGE--LSTM combination remains the clearest validation-portfolio alpha.

\section{Deployment Boundaries}
\label{sec:capacity}
Daily stock sorting draws strength from universe breadth, so restricting capacity changes the ranking. Figure~\ref{fig:capacityfrontier} traces the capacity frontier for the promoted models, plotting decile net Sharpe at 20 bps from the full stock universe to the largest 100 stocks.

\begin{figure}[!htbp]
    \centering
    \includegraphics[width=0.88\textwidth]{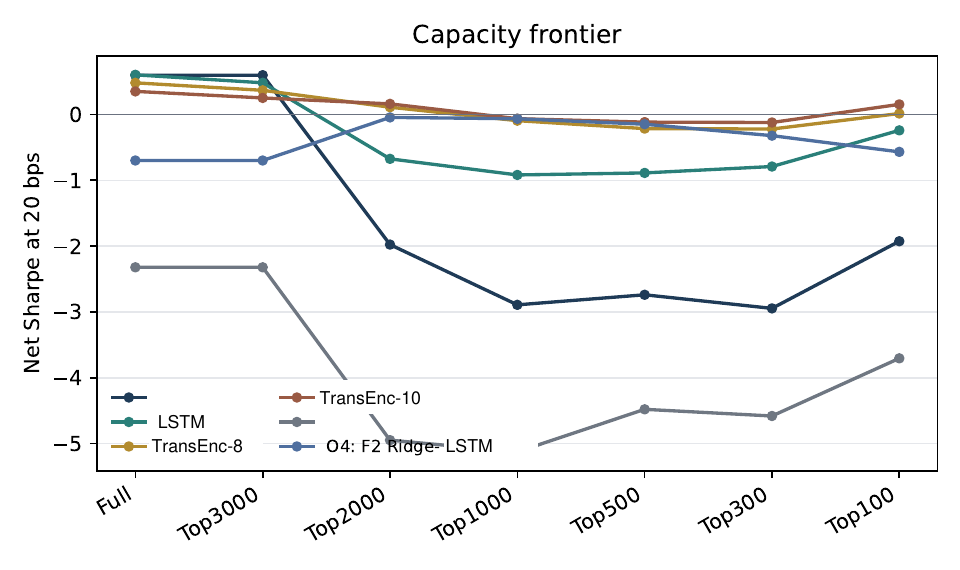}
    \caption{Capacity frontier for promoted models.}
    \label{fig:capacityfrontier}
\end{figure}

\begin{table}[!htbp]
\centering
\TableSetup
\caption{Capacity frontier: decile net Sharpe at 20 bps.}
\label{tab:capacityfrontier}
\FitTable{\input{table_capacity_frontier_net20.tex}}
\end{table}

Table~\ref{tab:capacityfrontier} gives the same frontier values, with the Full and Top3000 rows matching the common-window cost-adjusted benchmark. TS-RIDGE has net Sharpe of 0.60 in the broad universe but turns negative once the universe is restricted below the largest 2,000 names; LSTM is similar in the broad universe and also weakens under tighter capacity, whereas TransEnc-8 and TransEnc-10 have lower broad-universe Sharpe but lose less from large-cap concentration. The frontier shows that most daily alpha in this design comes from breadth, not from a large-cap core.

\begin{table}[!htbp]
\centering
\TableSetup
\caption{Positive net-performance configurations across implementation restrictions.}
\label{tab:deployablepositive}
\FitTable{\input{table_deployable_positive_configs.tex}}
\end{table}

Table~\ref{tab:deployablepositive} consolidates the settings in which decile net performance remains positive. The strongest entries come from low-price or small-stock buckets and from weekly rebalancing, while broad-universe TS-RIDGE and LSTM remain positive. Large-cap capacity restrictions weaken most daily decile portfolios, so deployability concentrates in breadth, lower trading frequency, or specific stock buckets.

\begin{figure}[!htbp]
    \centering
    \includegraphics[width=0.83\textwidth]{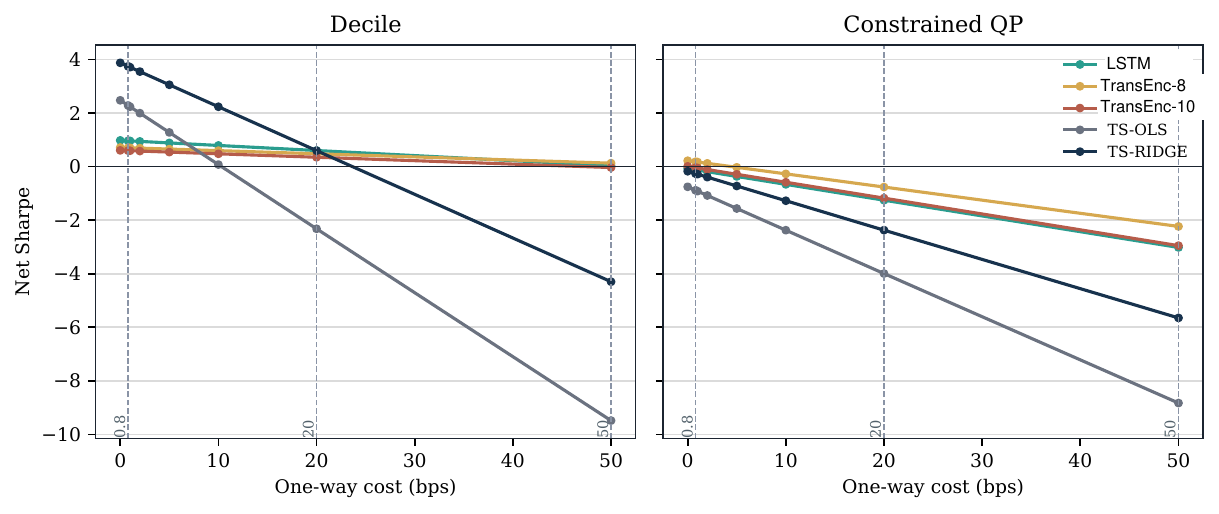}
    \caption{Breakeven transaction-cost diagnostics for promoted models. Curves report net Sharpe under one-way cost schedules for the broad decile layer and the baseline constrained-QP layer; vertical reference lines mark 0.8, 20, and 50 bps.}
    \label{fig:breakevencost}
\end{figure}

Figure~\ref{fig:breakevencost} makes the cost assumption explicit rather than treating 20 bps as a universal breakpoint. A very low commission-like estimate such as 0.8 bps is most relevant for large, liquid names with optimized execution; by contrast, the daily equal-weighted long--short signals in this paper draw much of their strength from low-price and small-stock buckets, where effective half-spreads and market-impact costs can be much larger \citep{DeMiguelMartinUtreraNogalesUppal2020,DetzelNovyMarxVelikov2023,ChenVelikov2023,FrazziniIsraelMoskowitz2018}. In the broad decile layer, the estimated zero-Sharpe costs range from about 10 bps for TS-OLS to roughly 24--61 bps for the other promoted models, so 0.8 bps leaves the decile results positive for most models. In the baseline constrained-QP layer, however, the breakeven costs are at or below a few basis points and are negative for TS-RIDGE, LSTM, and TS-OLS because their constrained gross implementations are already below zero. Appendix~\ref{app:smaa-cost-transfer} reports the corresponding breakeven table, the square-root cost robustness, and a Top1000 low-cost check; together they show that low-cost assumptions help only where a sufficiently liquid universe still contains usable cross-sectional signal.

\begin{table}[!htbp]
\centering
\TableSetup
\caption{Design sensitivity and aggregate boundary tests.}
\label{tab:stabilityboundary}
\input{table_stability_boundary_main.tex}
\end{table}

The F2 TS-RIDGE--LSTM combination weakens under the Top500 restriction, and its large-cap counterpart turns negative. Broad-universe stock sorting and large-cap deployment impose different signal, turnover, and tradability constraints. The full-universe evidence identifies where the signal is strongest, while the large-cap contrast shows how much signal survives a more deployable universe.

\begin{figure}[!htbp]
    \centering
    \begin{subfigure}[t]{0.49\textwidth}
        \centering
        \includegraphics[width=\textwidth]{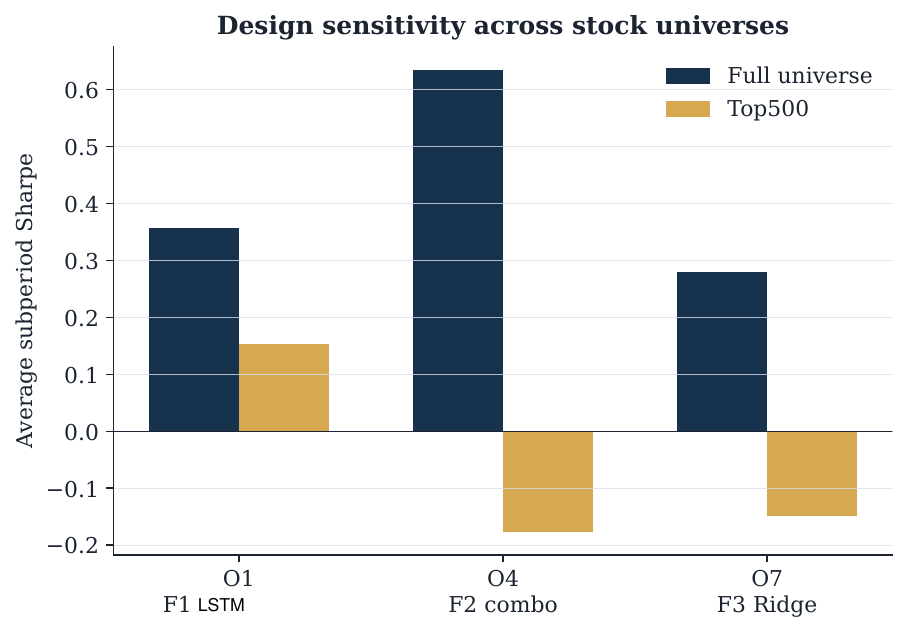}
        \caption{Full-universe versus Top500 design comparison.}
        \label{fig:d12design}
    \end{subfigure}\hfill
    \begin{subfigure}[t]{0.49\textwidth}
        \centering
        \includegraphics[width=\textwidth]{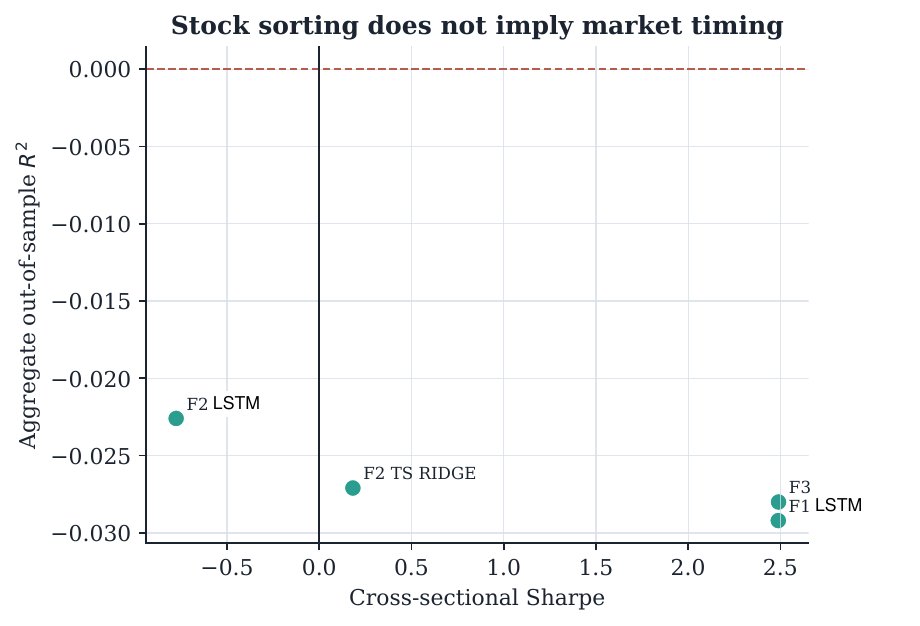}
        \caption{Cross-sectional Sharpe versus aggregate out-of-sample $R^2$.}
        \label{fig:d5boundary}
    \end{subfigure}
    \caption{Design sensitivity and aggregate prediction boundary.}
    \label{fig:designboundarydual}
\end{figure}

The aggregate boundary test is sharper than the state comparison. None of the market-plus-signal specifications produces positive aggregate out-of-sample $R^2$, so strong stock-level sorting does not automatically become market-timing ability. Adjacent forecasting targets impose different constraints once the target, aggregation level, and portfolio rule change.

General-forecasting transfer is retained only as a boundary note and reported in the appendix. The GIFT-Eval proxy comparison is intentionally low powered because exact model matches are rare; it finds no reliable monotone transfer relation from generic time-series forecasting rank to the CRSP cross-sectional decision objective. The absence of transfer is consistent with the paper's main mechanism: portfolio value is mediated by ranks, costs, turnover, and the allocation map rather than by a generic forecasting loss.

\section{Discussion}
\label{sec:discussion}
The results place architecture comparison inside a decision framework. Gu, Kelly, and Xiu~\cite{GuKellyXiu2020} show that flexible machine-learning methods improve monthly cross-sectional prediction, but the daily evidence here is more conditional because the value of architectural capacity depends on signal structure, turnover, and the decision rule that translates scores into positions. The regularization path in Figure~\ref{fig:ridgelambda} extends the shrinkage logic of Kozak, Nagel, and Santosh~\cite{KozakNagelSantosh2020} from factor selection to architecture selection: richer signal sets demand stronger coefficient discipline.

The SMAA layer supplies the operational-research object. It maps each architecture to the preference states under which it is admissible and turns the benchmark into a surface rather than a single ordering. The turnover-weight boundary in Figure~\ref{fig:smaaweightspace} gives an interpretable threshold between return-oriented and cost-sensitive selection, and Proposition~\ref{thm:turnover-alignment} gives the elementary downstream analogue in the turnover penalty of the constrained portfolio. Because the rank-acceptability bands add sampling uncertainty to this preference map, the ranking is read as a decision surface rather than as a fixed ordering.

The constrained portfolio layer changes the economic interpretation. If an institution selected only from the raw SMAA rank-1 surface, TS-RIDGE would remain attractive because of its criteria-level acceptability. After the same scores pass through capacity, beta, industry, risk, and turnover constraints, that choice becomes a costly implementation relative to TransEnc-8. The framework therefore changes the decision: it moves the selected constrained architecture toward the lower-turnover transformer encoder and treats TS-RIDGE as a strong broad-universe sorting signal rather than as the preferred daily constrained-QP implementation. The negative constrained-QP net Sharpe values at 20 bps are retained because they identify this avoided misselection and show that turnover control ranks implementation losses without converting the daily constrained strategy into a profitable standalone rule. Propositions~\ref{thm:regret-lipschitz} and~\ref{thm:robust-shrinkage} provide interpretive optimization links; the realized net-Sharpe regret tables provide the empirical downstream diagnostic.

The capacity and transfer tests define the external-validity boundary. The capacity frontier separates broad-universe stock sorting from large-cap deployability, while the aggregate and general-forecasting transfer checks separate cross-sectional ranking from market timing and generic time-series accuracy. Because the CRSP objective is mediated by ranks, portfolio construction, trading costs, and microstructure, a generic forecasting loss does not track it mechanically.

The implementation checks give a constructive counterpoint to the negative constrained-daily results. Positive net performance appears in broad-universe decile portfolios, selected low-price or small-stock buckets, and weekly rebalancing, so the signal is bounded rather than absent. Model selection should be read as a protocol in which the selected architecture is a function of investor preferences, market state, universe breadth, and implementation frequency.

The limitations are directly testable. The sample spans COVID, the post-COVID recovery, inflation shocks, and elevated cross-sectional dispersion, yet future samples can test additional regimes, and the deliberately compact predictor library leaves room to examine whether broader characteristic sets strengthen or weaken the shrinkage result. The SMAA intervals in this paper cover criteria sampling and preference uncertainty, not training-seed uncertainty in neural estimation; larger compute would allow a further extension outside the present package: multi-seed deep-learning refits. A full day-by-day robust QP reoptimization, rather than the ex-post robust valuation of Table~\ref{tab:drofrontier}, is reported in Section~\ref{sec:portfolio} and Appendix~\ref{tab:supp_robustqpreopt}. A natural operations-research extension would replace uniform Dirichlet preferences with elicited institutional priors and replace the diagonal-risk quadratic program with a richer distributionally robust portfolio layer.

\section{Conclusion}
\label{sec:conclusion}
Benchmarking deep time-series models for daily equity portfolios is a stochastic multi-criteria decision problem. The raw score, the preference-weighted ranking, and the constrained portfolio all matter. A deployment-adjusted acceptability index ties SMAA ranking to downstream regret through an entropically regularized update, and a finite-model turnover-slice result connects the multi-criteria turnover weight to the portfolio turnover penalty. The robustification and sensitivity results provide interpretive optimization bridges for the constrained portfolio layer.

Empirically, the selected architecture is preference- and state-dependent, and the daily signal is real but breadth-bounded: it survives in broad-universe, lower-frequency, and selected-bucket designs and dissolves under large-cap restriction and realistic costs. TransEnc-8 is the least negative constrained daily implementation, while TS-RIDGE retains its strength in broad-universe decile sorting and selected lower-frequency or bucket-restricted designs. The resulting benchmark is a portable selection and diagnosis framework. Its main decision content is the entropically regularized deployment-adjusted index and the turnover-weight to turnover-penalty alignment, while the standard robustness and Lipschitz results serve as interpretive bridges. It is not a standalone trading-strategy claim. Ranking implementation losses is still decision-relevant because it prevents the return-oriented misselection of TS-RIDGE under daily constrained deployment. Future work can replace the preference distribution with institutional priors, strengthen the portfolio layer with a richer distributionally robust portfolio layer, and carry the design to other markets, horizons, and capacity regimes.

\section*{Data and Code Availability}
CRSP data are available through WRDS subject to license restrictions. Fama--French factors are publicly available from the Kenneth R. French Data Library. Appendix~\ref{app:implementation} gives the data sources, feature universes, model settings, and portfolio-layer parameters used in this study. Licensed CRSP inputs are not redistributed.

\newpage

\appendix
\section{Proofs of Methodological Propositions}
\label{app:proofs}

\begin{proof}[Proof of Proposition~\ref{prop:da-entropic}]
On \(S_d\), all components of \(q\) are strictly positive. The objective is a linear term plus the strictly convex relative entropy term, so it is strictly convex on \(\Delta(S_d)\). With a multiplier \(\mu\) for the constraint \(\sum_m p_m=1\), the first-order condition is
\[
\mathcal R_m^d+\tau_d\left(\log\frac{p_m}{q_m}+1\right)+\mu=0,
\qquad m\in S_d.
\]
Thus
\[
p_m=q_m\exp\{-\mathcal R_m^d/\tau_d\}\exp\{-1-\mu/\tau_d\}.
\]
Normalizing over \(S_d\) removes the common constant and gives
\[
p_m=\frac{q_m\exp\{-\mathcal R_m^d/\tau_d\}}{\sum_{\ell\in S_d}q_\ell\exp\{-\mathcal R_\ell^d/\tau_d\}}
=\frac{RA_m(1)\exp\{-\mathcal R_m^d/\tau_d\}}{\sum_{\ell\in S_d}RA_\ell(1)\exp\{-\mathcal R_\ell^d/\tau_d\}}.
\]
This is Definition~4 after writing \(\gamma_d=1/\tau_d\), with zero mass assigned to models outside \(S_d\). Strict convexity gives uniqueness. As \(\tau_d\to\infty\), every exponential term converges to one and the solution returns to \(q\). As \(\tau_d\downarrow0\), the Gibbs weights vanish for all non-minimizers of \(\mathcal R_m^d\) and retain \(q\)-proportional mass on the minimizer set.
\end{proof}

\begin{proof}[Proof of Proposition~\ref{thm:robust-shrinkage}]
For a fixed feasible portfolio \(w\),
\[
\inf_{\tilde s\in \mathcal U_2(\rho)} w^\top\tilde s
=w^\top s+\inf_{\|u\|_2\leq\rho}w^\top u
=w^\top s-\rho\|w\|_2,
\]
where the last equality is the support function of the Euclidean ball applied to \(-w\). Substitution into the max--min objective gives the ellipsoidal robust-allocation identity.

For the distributional statement, the linear map \(\xi\mapsto w^\top\xi\) is \(\|w\|_*\)-Lipschitz under the ground metric induced by \(\|\cdot\|\). Kantorovich--Rubinstein duality therefore gives the lower bound
\[
\E_P[w^\top\xi]\geq \E_{P_0}[w^\top\xi]-\varepsilon\|w\|_*,
\qquad W_1(P,P_0)\leq\varepsilon .
\]
In finite dimension the dual norm is attained: choose \(v\) with \(\|v\|=1\) and \(w^\top v=-\|w\|_*\). Since the ambiguity set has no support restriction, the law \(P_\varepsilon\) of \(\xi+\varepsilon v\), with \(\xi\sim P_0\), is admissible and satisfies \(W_1(P_\varepsilon,P_0)\leq\varepsilon\). For this law,
\[
\E_{P_\varepsilon}[w^\top\xi]
=\E_{P_0}[w^\top\xi]-\varepsilon\|w\|_* .
\]
The lower bound is therefore tight, and
\[
\inf_{P:W_1(P,P_0)\leq \varepsilon}\E_P[w^\top\xi]
=\E_{P_0}[w^\top\xi]-\varepsilon\|w\|_*
=w^\top\bar s-\varepsilon\|w\|_* .
\]
Thus robustification adds a convex norm penalty to the nominal score objective.

For monotonicity in the ellipsoidal radius, fix \(w\). The function \(\rho\mapsto w^\top s-c(w)-\rho\|w\|_2\) is affine and nonincreasing. The robust value is the pointwise supremum of these affine functions over the compact set \(\mathcal W\), hence it is convex and nonincreasing. If \(c\) is strictly convex on the affine hull of \(\mathcal W\), then \(w\mapsto w^\top s-c(w)-\rho\|w\|_2\) is strictly concave for each \(\rho\), so the optimizer is unique. Danskin's theorem gives \(V'(\rho)=-\|w^*(\rho)\|_2\), and since \(V\) is convex its derivative is nondecreasing, so \(\|w^*(\rho)\|_2\) is nonincreasing in \(\rho\). Continuity of \(w^*(\rho)\) follows from Berge's maximum theorem because \(\mathcal W\) is compact and the objective is continuous.
\end{proof}

\begin{proof}[Proof of Proposition~\ref{thm:regret-lipschitz}]
For any \(w\in\mathcal W\),
\[
\left|\{w^\top s-c(w)\}-\{w^\top s'-c(w)\}\right|
=|w^\top(s-s')|
\leq \|w\|_1\|s-s'\|_\infty
\leq L\|s-s'\|_\infty .
\]
If \(w_s\) maximizes \(J_c(s)\), then
\[
J_c(s)-J_c(s')
\leq \{w_s^\top s-c(w_s)\}-\{w_s^\top s'-c(w_s)\}
\leq L\|s-s'\|_\infty .
\]
Reversing \(s\) and \(s'\) gives the fixed-state bound. For different downstream penalty states, the same maximizer argument gives
\[
\begin{aligned}
J_c(s)-J_{\tilde c}(s')
&\leq w_s^\top(s-s')+\tilde c(w_s)-c(w_s) \\
&\leq L\|s-s'\|_\infty+\Delta(c,\tilde c),
\end{aligned}
\]
and reversing the two states gives the absolute-value bound.

For the model-level optimization-value regret under a common downstream state, choose
\[
\ell^*\in\arg\max_{\ell\in\mathcal M_d}J_c(s^{(\ell)}).
\]
Then
\[
\mathcal R_J(m)=J_c(s^{(\ell^*)})-J_c(s^{(m)})
\leq L\|s^{(\ell^*)}-s^{(m)}\|_\infty
\leq L\max_{\ell\in\mathcal M_d}\|s^{(\ell)}-s^{(m)}\|_\infty .
\]
The state-dependent version follows by applying the two-state bound to each pair \((\ell,m)\) and then taking the maximum over \(\ell\). Applying the single-date fixed-state inequality date by date and averaging gives the sequence bound. The result concerns the optimization objective \(J_t\) for a fixed downstream design or for explicitly controlled downstream-state differences. A bound for realized net-Sharpe regret would additionally require assumptions linking the optimized weights to realized returns and controlling the Sharpe denominator; those assumptions are not used in the paper, so the empirical realized-regret tables are interpreted as diagnostics rather than as deterministic corollaries of this proposition.
\end{proof}
\begin{proof}[Proof of Proposition~\ref{thm:turnover-alignment}]
For any two architectures \(i,j\), the preference-layer utility difference is
\[
U_i(u)-U_j(u)
=(1-u)(a_i-a_j)+u(c_i-c_j).
\]
Using \(c_m=b-\kappa\tau_m\), this becomes
\[
U_i(u)-U_j(u)
=(1-u)(a_i-a_j)-u\kappa(\tau_i-\tau_j)
=(1-u)\{(a_i-a_j)-\lambda(u)(\tau_i-\tau_j)\}.
\]
Since \(1-u>0\) for \(u\in[0,1)\), the sign of this difference is the sign of
\[
\Pi_i(\lambda(u))-\Pi_j(\lambda(u))
=(a_i-a_j)-\lambda(u)(\tau_i-\tau_j).
\]
Therefore every pairwise comparison agrees under \(U(\cdot,u)\) and \(\Pi(\cdot,\lambda(u))\), and the full ranking over the finite set \(\mathcal M\) agrees as well, including ties.

If \(a_i>a_j\) and \(\tau_i>\tau_j\), the common switch is obtained by setting the displayed difference to zero:
\[
u^*_{ij}=\frac{a_i-a_j}{(a_i-a_j)+\kappa(\tau_i-\tau_j)}.
\]
The portfolio proxy switch solves \((a_i-a_j)-\lambda(\tau_i-\tau_j)=0\), giving \(\lambda^*_{ij}=(a_i-a_j)/(\tau_i-\tau_j)\). Direct substitution yields
\[
\kappa\frac{u^*_{ij}}{1-u^*_{ij}}
=\kappa\frac{a_i-a_j}{\kappa(\tau_i-\tau_j)}
=\frac{a_i-a_j}{\tau_i-\tau_j}
=\lambda^*_{ij}.
\]
As \(u\uparrow1\), \(\lambda(u)\to\infty\), and the non-turnover aggregate \((1-u)a_m\) vanishes relative to the turnover coordinate, giving the stated turnover-only limiting ranking.
\end{proof}

\section{Feature and Combination Diagnostics}
\label{app:features-combos}
\label{app:features}
The appendix tables report the feature definitions, distributional checks, ablation results, and model-combination diagnostics behind the main text. Together they document the signal design and robustness checks behind the main ranking.

{\TableSetup\input{table_feature_definitions_longtable.tex}}

Table~\ref{tab:features} defines the twenty-four predictors and maps each predictor to the F1, F2, and F3 signal universes. The table separates price-path variables, activity variables, and size/beta/market transforms, which is the basis for the ablation design.

\begin{table}[!htbp]
\centering
\TableSetup
\caption{Summary statistics for the twenty-four CRSP predictors. Statistics are computed from the full F3 design panel after the same CRSP filters and rolling feature construction used in the benchmark.}
\label{tab:featurestats}
\FitTable{\input{table_feature_summary_statistics.tex}}
\end{table}

Table~\ref{tab:featurestats} shows that the retained predictors have complete post-screening coverage. The percentile columns show that the daily equity features keep wide cross-sectional tails after filtering.

{\TableSetup\input{table_feature_block_ablation_longtable.tex}}

Table~\ref{tab:ablationappendix} gives the full feature-block ablation under the common forecast protocol. Its F3 rows agree with the confirmatory benchmark, and the appendix audit tables report the 10-decile return matrix and the RankIC sign-direction checks used to audit the table.

\begin{table}[!htbp]
\centering
\TableSetup
\caption{Decile-by-decile next-day average returns for the promoted models across F1, F2, and F3. Scores use the same direction as the RankIC calculation: high raw score maps to D10, low raw score maps to D1, and the long--short spread is D10-D1.}
\label{tab:supp_decile_return_matrix}
\FitTable{\input{table_decile_return_matrix_reviewfix.tex}}
\end{table}

\begin{figure}[!htbp]
    \centering
    \begin{subfigure}{0.32\textwidth}
        \includegraphics[width=\linewidth]{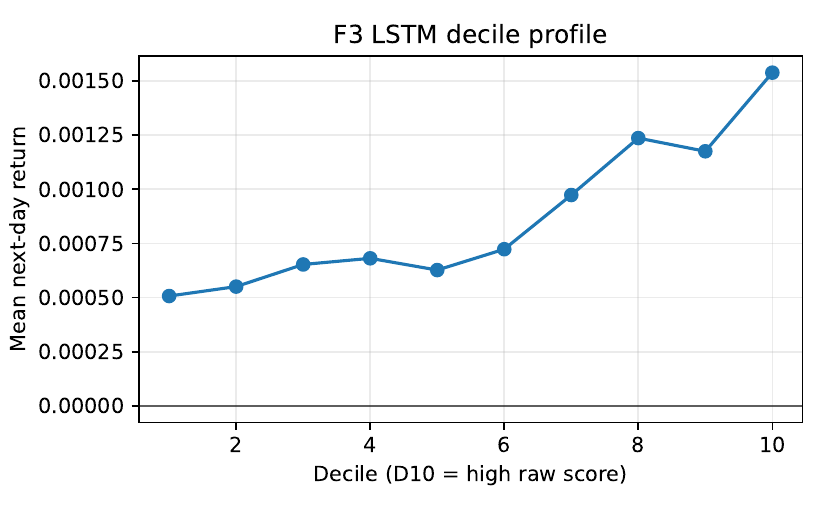}
        \caption{LSTM}
    \end{subfigure}\hfill
    \begin{subfigure}{0.32\textwidth}
        \includegraphics[width=\linewidth]{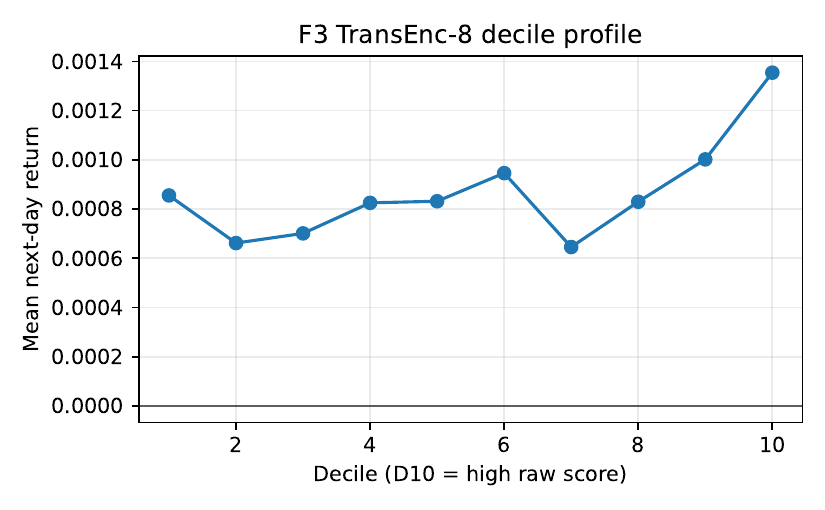}
        \caption{TransEnc-8}
    \end{subfigure}\hfill
    \begin{subfigure}{0.32\textwidth}
        \includegraphics[width=\linewidth]{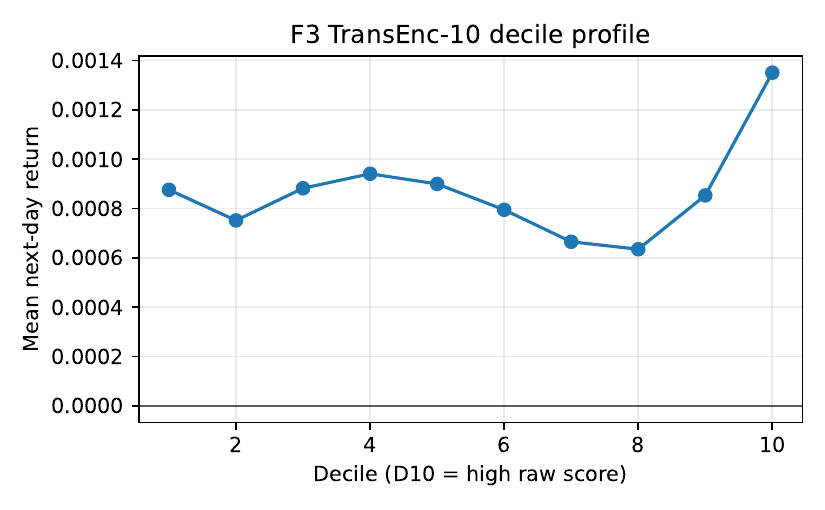}
        \caption{TransEnc-10}
    \end{subfigure}
    \caption{F3 neural-model decile-return profiles used to audit the negative RankIC and positive long--short Sharpe cases.}
    \label{fig:supp_f3_neural_decile_profiles}
\end{figure}

\begin{figure}[!htbp]
    \centering
    \begin{subfigure}{0.32\textwidth}
        \includegraphics[width=\linewidth]{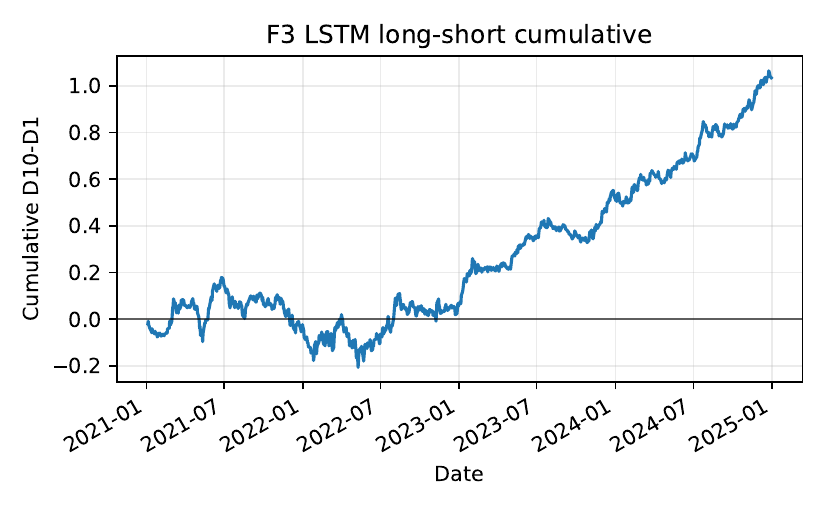}
        \caption{LSTM}
    \end{subfigure}\hfill
    \begin{subfigure}{0.32\textwidth}
        \includegraphics[width=\linewidth]{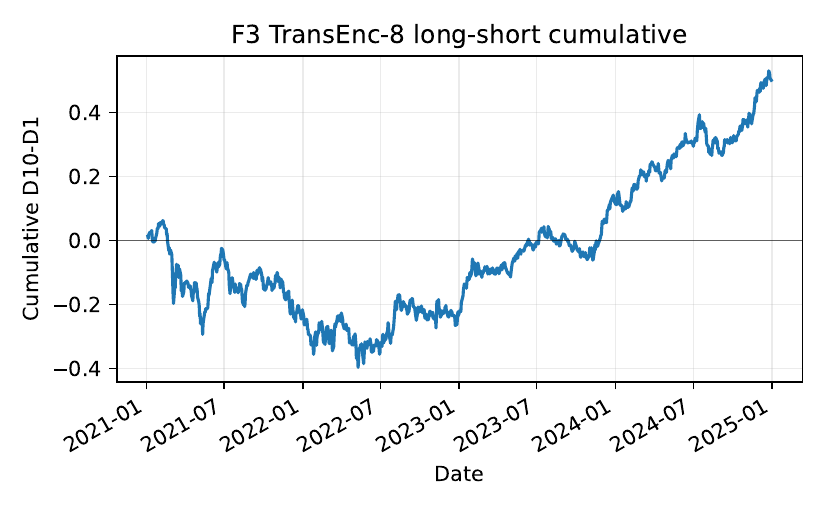}
        \caption{TransEnc-8}
    \end{subfigure}\hfill
    \begin{subfigure}{0.32\textwidth}
        \includegraphics[width=\linewidth]{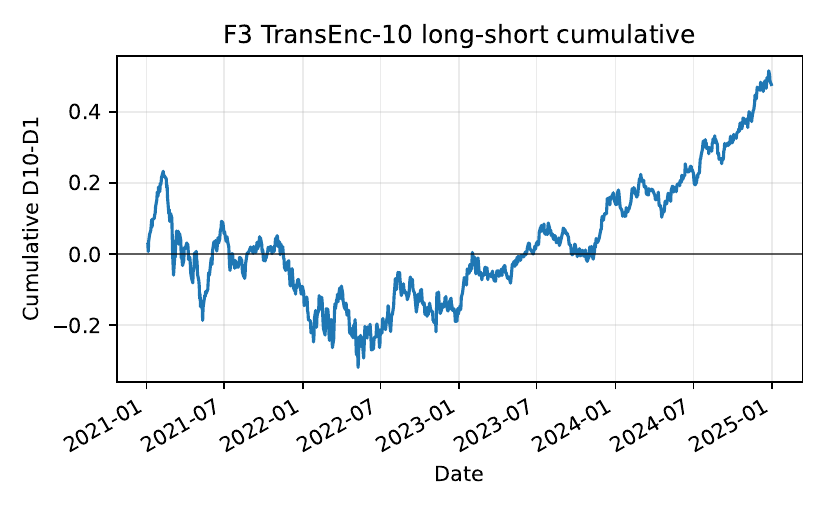}
        \caption{TransEnc-10}
    \end{subfigure}
    \caption{F3 cumulative long--short returns under the same score direction as the RankIC calculation.}
    \label{fig:supp_f3_neural_cumulative}
\end{figure}

The sign audit uses one convention throughout: high raw score enters the D10 long leg, low raw score enters the D1 short leg, and the spread is D10-D1. Under that convention, the F3 neural rows have negative average daily RankIC but positive long--short Sharpe, so the discrepancy is a tail and non-monotonicity effect rather than a long--short direction or label convention error.

\FloatBarrier
\newpage
\begin{table}[!htbp]
\centering
\TableSetup
\caption{Promoted-set pairwise bootstrap Sharpe-difference tests.}
\label{tab:pairwisepromoted}
\FitTable{\input{table_promoted_pairwise_sharpe.tex}}
\end{table}

Table~\ref{tab:pairwisepromoted} gives pairwise Sharpe differences for the promoted set. The gross-return tests separate TS-RIDGE from most peers, while the cost-adjusted analysis in the main text gives the economic ranking.

\begin{table}[!htbp]
\centering
\TableSetup
\caption{Full GIFT-Eval proxy mapping for the retained model universe.}
\label{tab:giftmapfull}
\FitTable{\input{table_general_finance_mapping.tex}}
\end{table}

Table~\ref{tab:giftmapfull} gives the external forecasting match used in the general-forecasting transfer note in the main text. Exact matches are limited after relabeling to the actual implementations, so the table gives the proxy tier used for each architecture and does not treat implementation aliases as exact matches.

% GIFT-Eval scatterplot omitted from the arXiv package; low-power proxy-transfer diagnostics are reported in Table~\ref{tab:giftstats}.

\begin{table}[!htbp]
\centering
\TableSetup
\caption{Promoted-set pairwise rolling-combination matrix.}
\label{tab:combomatrixappendix}
\FitTable{\input{table_promoted_combo_matrix.tex}}
\end{table}

Table~\ref{tab:combomatrixappendix} gives signal correlation, return correlation, and combination performance. Low signal correlation is useful only when both legs retain economic value after costs.

\newpage

\section{SMAA, Cost, and Transfer Diagnostics}
\label{app:smaa-cost-transfer}
This section keeps diagnostics that support the main text but are not needed for the main decision narrative.

\begin{table}[!htbp]
\centering
\TableSetup
\caption{Monte Carlo precision check for SMAA rank-1 acceptability. The 50,000 preference draws are used only for this convergence diagnostic; the main SMAA design uses 10,000 moving-block criteria resamples and 10,000 Dirichlet preference draws.}
\label{tab:supp_smaaconvergence}
\FitTable{\input{table_v8_smaa_convergence.tex}}
\end{table}

\begin{figure}[!htbp]
    \centering
    \includegraphics[width=0.70\textwidth]{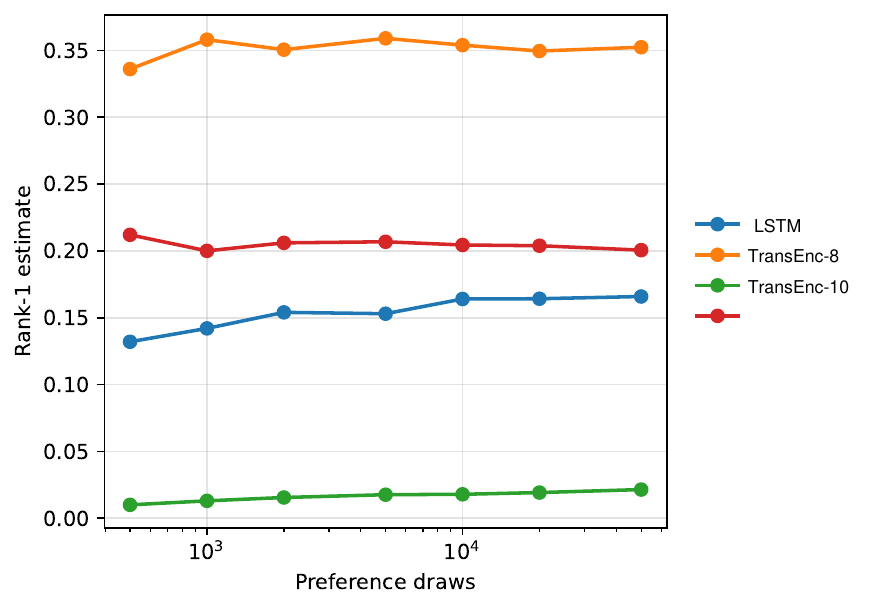}
    \caption{SMAA rank-1 Monte Carlo convergence for leading models.}
    \label{fig:supp_smaaconvergence}
\end{figure}

\begin{table}[!htbp]
\centering
\TableSetup
\caption{Deployment-adjusted rank-1 sensitivity to the dimensionless regret-discount multiplier \(c\). The production definition uses \(c=1\); the grid is an audit-only perturbation of the exponent \(\exp\{-c\mathcal R_m/\tau_d\}\), with \(\tau_d\approx1.170\).}
\label{tab:supp_da_gamma}
\FitTable{\input{table_deployment_adjusted_sensitivity.tex}}
\end{table}

\begin{figure}[!htbp]
    \centering
    \includegraphics[width=0.72\textwidth]{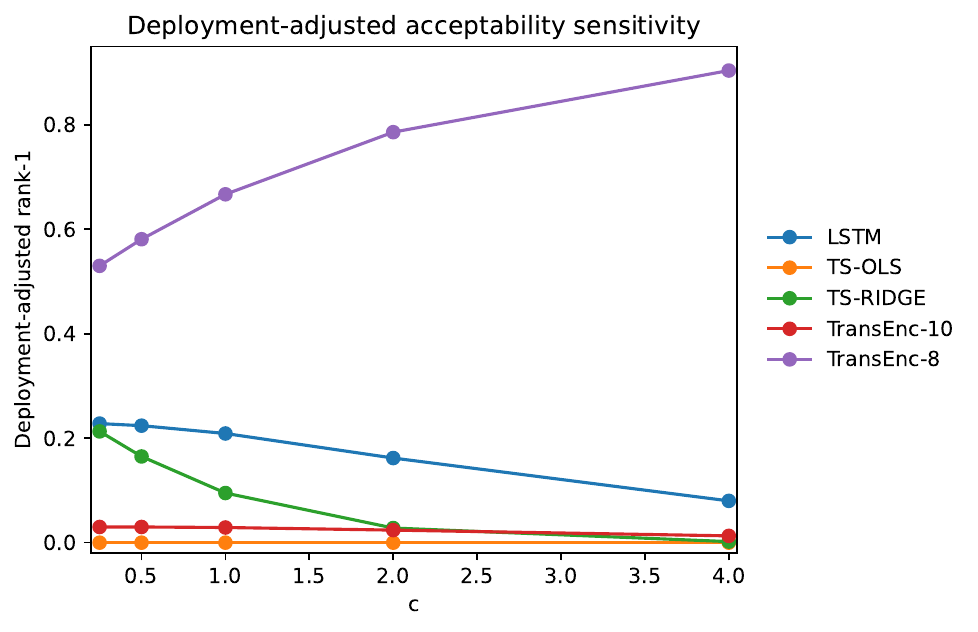}
    \caption{Deployment-adjusted rank-1 sensitivity across dimensionless regret-discount multipliers \(c\).}
    \label{fig:supp_da_gamma}
\end{figure}

\begin{table}[!htbp]
\centering
\TableSetup
\caption{Day-by-day robust QP re-optimization frontier. Entries are realized net Sharpe ratios at 20 bps after re-solving the robust constrained portfolio each day.}
\label{tab:supp_robustqpreopt}
\FitTable{\input{table_robust_qp_reoptimized_frontier.tex}}
\begin{minipage}{0.92\textwidth}
\footnotesize \emph{Note.} \(\rho=0\) reproduces the Table 13 baseline net Sharpe to within OSQP re-solve tolerance \((<0.03\) Sharpe). The \(L_2\) robust penalty is solved by an MM/IRLS sequence of OSQP quadratic subproblems with at most eight iterations and tolerance \(10^{-5}\).
\end{minipage}
\end{table}

\begin{figure}[!htbp]
    \centering
    \includegraphics[width=0.72\textwidth]{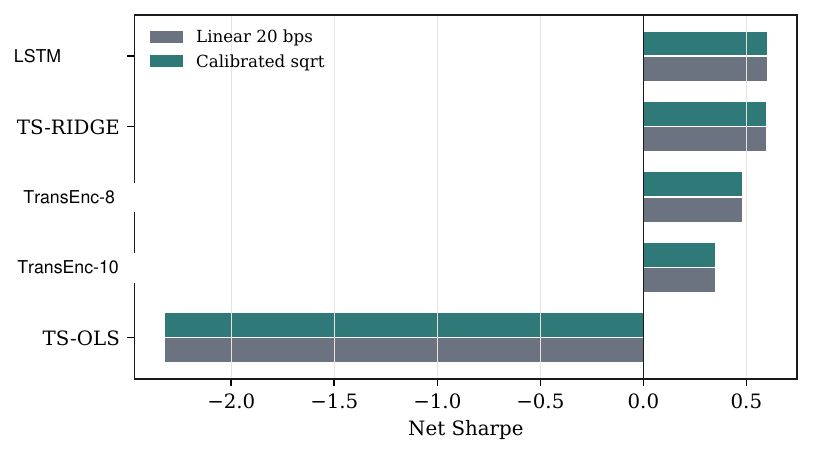}
    \caption{Linear and square-root transaction-cost robustness for promoted common-window portfolios.}
    \label{fig:supp_sqrtcost}
\end{figure}

\begin{table}[!htbp]
\centering
\TableSetup
\caption{Breakeven one-way transaction costs by promoted model and decision layer. Negative breakeven costs indicate that the corresponding gross constrained-QP implementation is already below zero before costs.}
\label{tab:supp_breakevencost}
\FitTable{\input{table_breakeven_cost_by_model_layer.tex}}
\end{table}

\begin{table}[!htbp]
\centering
\TableSetup
\caption{Top1000 large-cap low-cost check. Net Sharpe ratios are computed for the same corrected capacity daily series under one-way costs of 0.8, 1, 2, and 20 bps.}
\label{tab:supp_top1000lowcost}
\FitTable{\input{table_large_cap_low_cost_top1000.tex}}
\end{table}

Table~\ref{tab:supp_breakevencost} separates the common-window decile layer, the baseline constrained-QP layer, and the low-price/small-stock buckets. The broad decile portfolios survive substantially larger costs than the constrained-QP implementations, while the bucket rows show that much of the cost tolerance is concentrated in small or low-price names. Table~\ref{tab:supp_top1000lowcost} gives the large-cap low-cost counterfactual. Even at 0.8--2 bps, most Top1000 daily decile portfolios remain weak, with only TransEnc-8 and TransEnc-10 slightly positive; the result reinforces the main text's interpretation that low trading costs help only when the liquid universe still contains enough cross-sectional signal.

\begin{table}[!htbp]
\centering
\TableSetup
\caption{Core GIFT-Eval correlation diagnostics. The comparison is retained as a low-power transfer note rather than as a model-ranking input.}
\label{tab:supp_giftstats}
\FitTable{\input{table_general_finance_stats_core.tex}}
\end{table}

\FloatBarrier

\section{Tradability and Stability Diagnostics}
\label{app:tradability-stability}
The tradability tables report bucket performance, turnover decomposition, concentration, and subperiod splits. These diagnostics identify which results are broad and which depend on the trading universe.

\begin{table}[!htbp]
\centering
\TableSetup
\caption{Bucket-level net performance for the central promoted models and the F2 Ridge--LSTM combo.}
\label{tab:bucketappendix}
\FitTable{\input{table_bucket_net_performance_appendix.tex}}
\end{table}

Table~\ref{tab:bucketappendix} reports the central bucket diagnostics for TS-RIDGE, LSTM, and the F2 Ridge--LSTM combo. These rows are enough to show where trading frictions and stock characteristics concentrate the losses and avoids a duplicate model-by-model supplementary listing.

{\TableSetup\input{table_turnover_overlap.tex}}

Table~\ref{tab:turnoveroverlap} decomposes one-way turnover into within-leg rebalancing, entry/exit turnover, and cross-side migration. This table explains why models with similar gross returns can have different net Sharpe ratios.

\begin{table}[!htbp]
\centering
\TableSetup
\caption{Quintile-portfolio robustness for the leading common-window models. Quintile portfolios replace the benchmark decile rule with top and bottom fifth portfolios and retain the same common dates and turnover reconstruction.}
\label{tab:quintile}
\FitTable{\input{table_quintile_robustness.tex}}
\end{table}

Table~\ref{tab:quintile} checks whether the results depend on the decile cutoff. The quintile portfolios lower concentration and preserve the main ordering among the leading models.

\begin{table}[!htbp]
\centering
\TableSetup
\caption{Daily and weekly rebalancing robustness for promoted models.}
\label{tab:weeklyrebalancing}
\FitTable{\input{table_weekly_rebalance.tex}}
\end{table}

\begin{figure}[!htbp]
    \centering
    \includegraphics[width=0.74\textwidth]{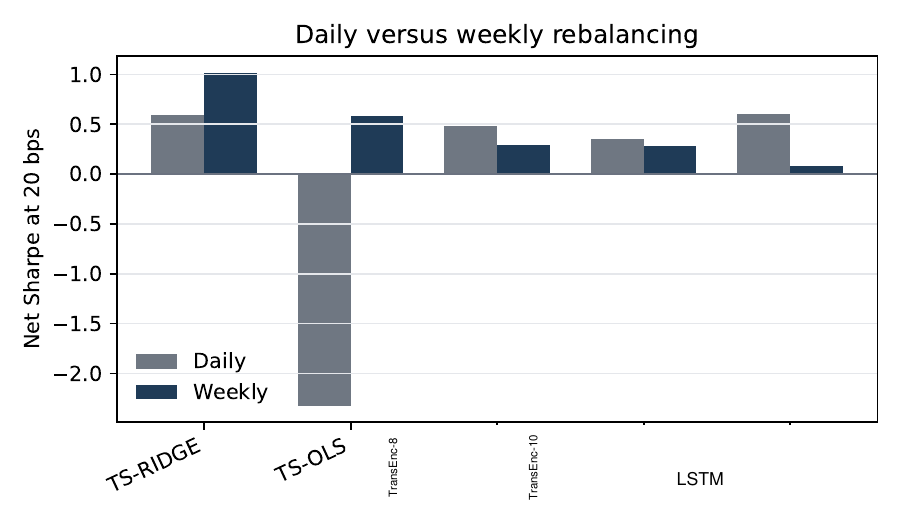}
    \caption{Net Sharpe ratios under daily and weekly rebalancing.}
    \label{fig:weeklyrebalancing}
\end{figure}

Table~\ref{tab:weeklyrebalancing} and Figure~\ref{fig:weeklyrebalancing} compare daily and five-day rebalanced decile portfolios. Weekly rebalancing cuts turnover sharply for every promoted model. TS-RIDGE moves from daily net Sharpe of 0.60 to weekly net Sharpe of 1.02, and TS-OLS also improves because trading intensity falls. The nonlinear models have lower turnover under both rules, so the weekly change is smaller for them. The ranking under weekly rebalancing is led by TS-RIDGE, TS-OLS, and TransEnc-8.

\FloatBarrier
\begin{table}[!htbp]
\centering
\TableSetup
\caption{State-by-subperiod stability summaries.}
\label{tab:stabilitystateappendix}
\FitTable{\input{table_direction12_state.tex}}
\end{table}

Table~\ref{tab:stabilitystateappendix} splits the central portfolios by volatility state and subperiod. The table shows whether each result comes from one state or persists across market conditions.

\begin{table}[!htbp]
\centering
\TableSetup
\caption{Design-sensitivity subperiod summaries.}
\label{tab:stabilitydesignappendix}
\FitTable{\input{table_direction12_design.tex}}
\end{table}

Table~\ref{tab:stabilitydesignappendix} compares full-universe and Top500 results by subperiod. The table separates broad stock-sorting performance from large-cap deployability.

\FloatBarrier
\newpage
\section{Implementation and Reproducibility}
\label{app:implementation}
The implementation tables summarize the data, training, feature-universe, and model-setting choices used in the benchmark. Licensed raw inputs are excluded, and the replication design is fully specified.

The empirical analysis used a fixed software environment: Python 3.12.7, NumPy 1.26.4, SciPy 1.13.1, pandas 2.2.2, Matplotlib 3.9.2, scikit-learn 1.5.1, statsmodels 0.14.2, \texttt{arch} 8.0.0, CVXPY 1.6.5, OSQP 1.1.1, and PyTorch 2.8.0. Forecast estimation, resampling, financial diagnostics, portfolio construction, and optimization were run from fixed scripts with deterministic data splits.

Most computation came from repeated walk-forward model estimation and common-window forecast construction. The remaining computations covered feature-universe comparisons, financial diagnostics, stochastic multi-criteria analysis, Model Confidence Set estimation, capacity-frontier calculations, transaction-cost sensitivity analysis, and constrained portfolio optimization. Random seeds were fixed before stochastic model estimation and resampling, with the exact values recorded in the replication scripts rather than used as substantive results.

The financial comparison uses fixed stock-level forecasts produced by one training protocol. These appendices specify data provenance, estimation settings, feature universes, model settings, and decision-layer parameters. Licensed CRSP inputs are excluded from the arXiv package.

\begin{table}[!htbp]
\centering
\TableSetup
\caption{Data and replication protocol.}
\label{tab:datacodemanifest}
\FitTable{\input{table_data_code_manifest.tex}}
\end{table}

Table~\ref{tab:datacodemanifest} lists the data sources and replication components. It gives the ingredients needed to rebuild the analysis while keeping licensed CRSP inputs outside the arXiv package.

\begin{table}[!htbp]
\centering
\TableSetup
\caption{Detailed common training, alignment, inference, and portfolio protocol.}
\label{tab:trainingprotocol}
\FitTable{\input{table_training_protocol_detailed.tex}}
\end{table}

Table~\ref{tab:trainingprotocol} fixes the common filters, alignment, inference method, and portfolio construction rule. These choices define the shared evaluation window used across models.

\begin{table}[!htbp]
\centering
\TableSetup
\caption{Feature-universe implementation protocol for confirmatory and ablation analyses.}
\label{tab:featureuniverseprotocol}
\FitTable{\input{table_feature_universe_protocol.tex}}
\end{table}

Table~\ref{tab:featureuniverseprotocol} maps F1, F2, F3, and the leave-one-block-out variants. It gives the signal sets used in the mechanism tests.

\begin{table}[!htbp]
\centering
\TableSetup
\caption{Model label mapping. Source registry keys are early scaffolding labels inherited from the experiment harness and are kept unchanged in the replication scripts; they bear no relation to model selection or to the architectures actually estimated. The manuscript labels describe the actual canonical implementation used in the benchmark, not the frontier system whose name a registry key happens to reuse.}
\label{tab:modellabelmapping}
\FitTable{\input{table_model_label_mapping.tex}}
\end{table}

\begin{table}[!htbp]
\centering
\TableSetup
\caption{Model implementation settings for the retained fifteen-model benchmark. Parameter counts use the full F3 input tensor with 60 time steps and 24 features.}
\label{tab:modelhyperparams}
\FitTable{\input{table_model_hyperparameters.tex}}
\end{table}

Table~\ref{tab:modelhyperparams} summarizes the estimator or neural architecture used for each retained model. The parameter counts show how model capacity varies within the common protocol.

\subsection*{SMAA, MCS, and QP settings}
SMAA uses seven criteria: gross Sharpe, net Sharpe at 20 bps, value-weighted Sharpe, absolute five-factor alpha $t$-statistic, negative turnover, negative maximum drawdown, and bootstrap significance. Criteria uncertainty uses 10,000 moving-block resamples. Preference uncertainty uses 10,000 Dirichlet weight draws on the seven-dimensional simplex.

The Model Confidence Set uses moving-block bootstrap inference with 10,000 replications and block size 21. Gross and net return matrices are tested at 90\% and 95\% confidence. The inputs are the same 1,197 common evaluation dates used in the master benchmark.

The constrained portfolio problem is solved with OSQP. Each promoted model is optimized over 1,197 daily decisions under dollar neutrality, leverage of 2, a 1\% dollar-volume single-name capacity cap for a 100 million dollar portfolio, beta neutrality, industry net exposure limits, and a diagonal risk estimate, with portfolio PnL measured from next-trading-day realized stock returns. Forecast scores are centered and z-scored within each trading-day cross-section before they enter the linear score term of the QP, so the optimization uses standardized ranking signals rather than raw model-output scale. The implementation designs are baseline, higher turnover penalty, and 50 bps cost stress; the low-risk variant is omitted from the main table because it is numerically indistinguishable from the baseline in this sample.

Regime-conditioned SMAA repeats the seven-criterion acceptability analysis separately on high- and low-volatility dates. The sequential validation check estimates acceptability on the first half of the evaluation period and compares the resulting ranking with the second half. The weekly rebalancing check keeps the daily model scores fixed and changes only the portfolio holding schedule to five trading days.

Preference-space geometry uses the same seven normalized criteria as the main SMAA calculation and draws 10,000 Dirichlet weight vectors. The structured-prior check uses return-oriented, net-performance-oriented, and statistical-rigor Dirichlet concentration vectors. The optimized-portfolio SMAA uses QP gross Sharpe, net Sharpe, annualized net return, turnover, maximum drawdown, and bootstrap significance from the optimized daily return series.

Automatic block-length diagnostics for the central daily long--short return series follow the Politis--White automatic block-length rule~\citep{PolitisWhite2004}. The rounded circular estimates are 3 for TS-RIDGE, 1 for LSTM, and 2 for TransEnc-8, with stationary estimates of 2.4, 0.9, and 1.8, respectively. These selected lengths are below the 20-day benchmark block. The main inference keeps the longer 20-day block to avoid understating serial dependence in a daily equity setting.

\bibliographystyle{unsrtnat}
\bibliography{references}

\end{document}

%% file: table_cost_sensitivity_sharpe.tex
\begin{tabular}{lrrrrr}
\toprule
Model & 0bps & 10bps & 20bps & 30bps & 50bps \\
\midrule
TS-RIDGE & 3.88 & 2.23 & 0.60 & -1.04 & -4.30 \\
LSTM & 0.98 & 0.79 & 0.60 & 0.41 & 0.03 \\
TransEnc-8 & 0.71 & 0.60 & 0.48 & 0.36 & 0.13 \\
TransEnc-10 & 0.61 & 0.48 & 0.35 & 0.22 & -0.04 \\
TS-OLS & 2.48 & 0.08 & -2.32 & -4.72 & -9.49 \\
\bottomrule
\end{tabular}

%% file: table_v8_dro_frontier.tex
\begin{tabular}{lrrrrr}
\toprule
Model & $\rho=0$ & $\rho=0.001$ & $\rho=0.004$ & Avg. $\ell_2$ & Days \\
\midrule
TS-RIDGE & -2.37 & -2.58 & -3.19 & 0.377 & 1197 \\
LSTM & -1.25 & -1.48 & -2.17 & 0.383 & 1197 \\
TransEnc-8 & -0.76 & -0.99 & -1.68 & 0.436 & 1197 \\
TransEnc-10 & -1.18 & -1.34 & -1.83 & 0.262 & 1197 \\
TS-OLS & -3.99 & -4.20 & -4.82 & 0.353 & 1197 \\
\bottomrule
\end{tabular}

%% file: table_deployment_adjusted_acceptability.tex
\begin{tabular}{lrrrr}
\toprule
Model & SMAA rank-1 & QP regret & Deployment adj. rank-1 & Deployment adj. top-3 \\
\midrule
TransEnc-8 & 0.352 & 0.000 & 0.667 & 0.430 \\
LSTM & 0.168 & 0.492 & 0.209 & 0.282 \\
TS-RIDGE & 0.199 & 1.611 & 0.095 & 0.046 \\
TransEnc-10 & 0.022 & 0.414 & 0.029 & 0.241 \\
TS-OLS & 0.000 & 3.231 & 0.000 & 0.000 \\
\bottomrule
\end{tabular}

%% file: table_risk_diagnostics_dual_panel.tex
\textit{Panel A. Market-proxy diagnostics.}\par\smallskip
\FitTable{\input{table_factor_alpha_main.tex}}
\medskip
\textit{Panel B. Holdings exposure diagnostics.}\par\smallskip
\FitTable{\input{table_holdings_exposure_main.tex}}

%% file: table_stability_boundary_main.tex
\textit{Panel A. Full-stock versus Top500 design sensitivity.}\par\smallskip
\FitTable{\input{table_direction12_main.tex}}
\medskip
\textit{Panel B. Aggregate market-timing boundary.}\par\smallskip
\FitTable{\input{table_direction5_main.tex}}

%% file: table_v8_smaa_convergence.tex
\begin{tabular}{lrr}
\toprule
Model & Final rank-1 & 95\% half-width \\
\midrule
TransEnc-8 & 0.352 & 0.006 \\
TS-RIDGE & 0.199 & 0.006 \\
LSTM & 0.168 & 0.006 \\
TransEnc-10 & 0.022 & 0.006 \\
\bottomrule
\end{tabular}

%% file: table_deployment_adjusted_sensitivity.tex
\begin{tabular}{lrrrrr}
\toprule
Model & $c=0.25$ & $c=0.50$ & $c=1.00$ & $c=2.00$ & $c=4.00$ \\
\midrule
LSTM & 0.228 & 0.224 & 0.209 & 0.162 & 0.080 \\
TS-OLS & 0.000 & 0.000 & 0.000 & 0.000 & 0.000 \\
TS-RIDGE & 0.213 & 0.165 & 0.095 & 0.028 & 0.002 \\
TransEnc-10 & 0.030 & 0.030 & 0.029 & 0.024 & 0.013 \\
TransEnc-8 & 0.530 & 0.581 & 0.667 & 0.786 & 0.904 \\
\bottomrule
\end{tabular}

%% file: table_robust_qp_reoptimized_frontier.tex
\begin{tabular}{lrrr}
\toprule
Model & $\rho=0$ & $\rho=0.001$ & $\rho=0.004$ \\
\midrule
TransEnc-8 & -0.76 & -0.76 & -0.76 \\
TransEnc-10 & -1.18 & -1.18 & -1.18 \\
LSTM & -1.25 & -1.25 & -1.26 \\
TS-RIDGE & -2.38 & -2.38 & -2.37 \\
TS-OLS & -3.99 & -4.00 & -4.02 \\
\bottomrule
\end{tabular}